\newtheorem{thm}{Theorem}[section]
\newtheorem{corollary}{Corollary}
\newtheorem{lemma}[thm]{Lemma}
\newtheorem{theorem}[thm]{Theorem}
\newtheorem{proposition}{Proposition}
\newtheorem{claim}{Claim}
\newtheorem{question}{Question}
\newtheorem{remark}{Remark}
\newtheorem{example}{Example}
\numberwithin{equation}{section}
\newcommand{\bbC}{{\Bbb C}}
\newcommand{\CC}{{\widehat{\bbC}}}
\newcommand{\C}{{\mathbb C}}
\newcommand{\R}{{\mathbb R}}
\newcommand{\N}{{\mathbb N}}
\begin{document}

\title[Backward Iteration Algorithms for M\"obius Julia sets]{Backward Iteration Algorithms for Julia sets of M\"obius Semigroups}

\author{Rich Stankewitz}
\address[Rich Stankewitz]
    {Department of Mathematical Sciences\\
    Ball State University\\
    http://rstankewitz.iweb.bsu.edu/}
\email{rstankewitz@bsu.edu}

\author{Hiroki Sumi}
\address[Hiroki Sumi]
    {Department of Mathematics\\
    Graduate School of Science\\
    Osaka University\\
    1-1, Machikaneyama, Toyonaka\\
    Osaka, 560-0043, Japan\\
    http://www.math.sci.osaka-u.ac.jp/$\sim $sumi/welcomeou-e.html}
\email{sumi@math.sci.osaka-u.ac.jp}

\begin{abstract}
We extend a result regarding the Random Backward Iteration algorithm for drawing Julia sets (known to work for certain rational semigroups containing a non-M\"obius element) to a class of M\"obius semigroups which includes certain settings not yet been dealt with in the literature, namely, when the Julia set is not a thick attractor in the sense given in~\cite{FMS}.
\end{abstract}

\date{ September 1, 2016. Published in Discrete and Continuous Dynamical Systems Ser. A, 
{\bf 36}, 2016, 6475 - 6485.      }

\maketitle

\section{Introduction}
In this paper we consider the dynamics of rational semigroups (i.e.,
semigroups of rational maps on the Riemann sphere $\CC $ where the semigroup operation is
the composition of maps). In particular, we consider the algorithm to draw the figures of the Julia sets of
finitely generated  M\"obius semigroups (i.e., semigroups of M\"obius maps on $\CC $).
In~\cite{RandomBackStankewitzSumi} two methods for generating graphical computer approximations of Julia sets of finitely generated rational semigroups were discussed, the \textit{full backward iteration algorithm} and the \textit{random backward iteration algorithm}.  The former method was justified by the work of Boyd~\cite{Boyd2} and later generalized by Sumi~\cite{Su3}, and the latter method was justified by the present authors in~\cite{RandomBackStankewitzSumi}.  (See also~\cite{BarnsleyFractalsEvery, BarnsleyDemko} where these methods were first explored.
For the related topics in smooth ergodic theory, see
\cite{Mihailescu}.)  However, because of complexities that do not exist when there is a map of degree two or more, neither method was verified for classes of M\"obius semigroups in these papers.  The goal of the present paper, however, is to verify both methods for certain classes of M\"obius semigroups (see Theorems~\ref{FullMethod} and~\ref{main} for precise statements).

The \textit{full backward iteration algorithm} creates successive approximations to the Julia set as follows.  Starting with a seed value $z_0 \in \C$, the set $A_1$ of all preimages of $z_0$ under all generating maps $\{f_j:j=1, \dots, k\}$ of the semigroup is constructed.  (Note, our use of the word \emph{preimage} refers to a preimage of order 1 of $z_0$, i.e., a
point $y$ such that $f_j(y) = z_0$ for some $j$, as opposed to preimages of order $n \ge 2$, i.e., $y$ such that $f_{j_1} \circ \dots \circ f_{j_n}(y) = z_0$.)  Iteratively, the set $A_n$ is constructed to be the set of all preimages (of order 1) of all points in $A_{n-1}$ under all generating maps.  Hence, $A_n$ is the set of all preimages of order $n$ of $z_0$.  In general, for large $n$ the set $A_n$ approximates the Julia set of the semigroup (see precise statements in Section 2 where the $A_n$'s are exactly the supports of a corresponding convergent sequence of measures).

The \textit{random backward iteration algorithm}  (also known as the ``ergodic method" or ``chaos game" method)  creates successive approximations to the Julia set as follows.  Starting with a seed value $z_0 \in \C$ a random walk $\{z_n\}$ is generated by setting $z_n$ to be the outcome of randomly selecting one preimage of $z_{n-1}$ under a randomly selected generating map.  In general, the plotted points of this random walk then will give a visual approximation to the Julia set of the semigroup (see precise statements in Section 3 given in terms of convergence of certain measures).

Analogous results, for both the ``full" and ``random" methods, in the context of attractor sets for contracting iterated function systems (see~\cite{Hutchinson, BEH, Elton, RSThesis}) and Julia sets of  iterated single rational functions (see~\cite{HawkinsTaylor, Lyu, Mane, FLM}) are known.  Both the full and random methods for both attractor sets of iterated function systems and Julia sets of rational semigroups are implemented using the freely available application Julia 2.0~\cite{Julia2.0}.

We now introduce the basic terminology and notions needed to describe the full and random methods mentioned above.

For the entire paper we let $G=\langle f_1, \dots, f_k \rangle$ be a rational semigroup generated by M\"obius maps $f_j$ where the semigroup operation is
the composition of maps, i.e., $G$ is the collections of all maps which can be expressed as a finite composition of maps from the generating set $\{f_j:j=1, \dots, k\}$.  Also, since the results depend not on just the semigroup $G$ but also on the particular choice of generators, we assume each such semigroup comes with a particular fixed generating set of maps.

Research on the dynamics of rational
semigroups was initiated by Hinkkanen and Martin
in~\cite{HM1} (see also~\cite{HMJ}), and this remains a primary source for background information.  However, they studied only rational semigroups containing at least one element of degree at least two.  For an in depth look at the dynamics of M\"obius semigroups see~\cite{FMS}.

We follow~\cite{HM1} in saying that the \textit{Fatou set} $F(G)$ is the set of points in $\CC$ which have a neighborhood on which $G$ is normal, and its complement in $\CC$ is called the \textit{Julia set} $J(G)$.

We quote the following results from~\cite{HM1}. The Fatou set $F(G)$
is \textit{forward invariant} under each element of $G$, i.e.,
$g(F(G)) \subseteq F(G)$ for all $g \in G$, and thus $J(G)$ is
\textit{backward invariant} under each element of $G$, i.e.,
$g^{-1}(J(G)) \subseteq J(G)$ for all $g \in G$.
We note that the sets $F(G)$ and $J(G)$ are,
however, not necessarily completely invariant under the elements of
$G$.

We define the \emph{kernel Julia set} of $G$ by
$J_{ker}(G) =\cap_{g \in G} g^{-1} (J(G))$.

For a subset $A$ of $\CC $, we set $G(A)=\bigcup _{g\in G}g(A).$
Also, we set $G^{-1}:=\{ g^{-1}: g\in G\} $ and this is called the
inverse semigroup of $G$. Note that $G^{-1}$ is  a M\"obius semigroup
generated by $\{ f_{1}^{-1},\ldots,f_{k}^{-1}\} .$

The \emph{exceptional set} $E(G)$ is defined to be the set of points $z$ with a finite backward orbit $G^{-1}(z)=\cup_{g \in G} g^{-1}(\{z\})$.

We record the following well-known facts for later use.

\begin{proposition}\label{facts}
\textrm{ }
\begin{enumerate}
\item $J_{ker}(G)$ is the largest forward invariant subset
of $J(G)$ under the action of $G$.

\item For $z \in \CC \setminus E(G)$ we have $\overline{G^{-1}(z)} \supseteq J(G)$.  In particular, for $z \in J(G) \setminus E(G)$ we have $\overline{G^{-1}(z)} = J(G)$. \label{BackOrbitNonExcept}

\item $G(E(G)) = E(G)$. \label{ExcSetCompInv}

\item If $\#J(G) \geq 3$, then $J(G)$ is both perfect (thus uncountable) and the closure of the set of points that are repelling fixed points under any map in $G$. \label{RepelDense}

\end{enumerate}

\end{proposition}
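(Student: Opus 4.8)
The four assertions are largely set-theoretic once one exploits that each generator is a bijection of $\CC$; I treat them in turn. For part~(1), note first that $J_{ker}(G) \subseteq g^{-1}(J(G)) \subseteq J(G)$ by backward invariance. For forward invariance, take $z \in J_{ker}(G)$ and $h \in G$; for every $g \in G$ we have $g \circ h \in G$, so $(g\circ h)(z) \in J(G)$, i.e. $h(z) \in g^{-1}(J(G))$, and intersecting over $g$ gives $h(z) \in J_{ker}(G)$. For maximality, if $A \subseteq J(G)$ satisfies $g(A) \subseteq A$ for all $g \in G$, then each $z \in A$ has $g(z) \in A \subseteq J(G)$, so $z \in g^{-1}(J(G))$ for every $g$, whence $A \subseteq J_{ker}(G)$.

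For part~(2) the key is the criterion: if $K \subseteq \CC$ is closed, $g^{-1}(K) \subseteq K$ for all $g \in G$, and $\#K \ge 3$, then $J(G) \subseteq K$. Indeed backward invariance gives $g(\CC \setminus K) \subseteq \CC \setminus K$, so on $\CC \setminus K$ the family $G$ omits the three-or-more points of $K$ and is normal by Montel's theorem, forcing $\CC \setminus K \subseteq F(G)$. Now if $z \notin E(G)$ then $G^{-1}(z)$ is infinite, and $K := \overline{G^{-1}(z)}$ is closed with $\#K \ge 3$ and backward invariant, since $g^{-1}(G^{-1}(z)) = \bigcup_{h \in G}(h\circ g)^{-1}(z) \subseteq G^{-1}(z)$ and $g^{-1}$ is continuous; the criterion yields $J(G) \subseteq \overline{G^{-1}(z)}$. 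If moreover $z \in J(G)$, backward invariance gives $G^{-1}(z) \subseteq J(G)$ and hence $\overline{G^{-1}(z)} \subseteq J(G)$, so equality holds.

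Part~(3) rests on the observation that for an injective map a finite backward-invariant set is completely invariant: if $A$ is finite and $g^{-1}(A) \subseteq A$, then $\#g^{-1}(A) = \#A$ forces $g^{-1}(A) = A$, so $g(A) = A$. Fix $z \in E(G)$ and put $A := G^{-1}(z)$, which is finite and (as in part~(2)) backward invariant, hence completely invariant. Since $g^{-1}(z) \in A$ we get $z = g(g^{-1}(z)) \in g(A) = A$; consequently, for any $g \in G$, $g(z) \in g(A) = A$, and for every $h \in G$ we have $h^{-1}(g(z)) \in h^{-1}(A) \subseteq A$, so $G^{-1}(g(z)) \subseteq A$ is finite and $g(z) \in E(G)$. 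This gives $G(E(G)) \subseteq E(G)$. The reverse inclusion is immediate: $E(G)$ is also backward invariant (same computation, with $G^{-1}(g^{-1}(z)) = \bigcup_h (g\circ h)^{-1}(z) \subseteq G^{-1}(z)$), so $g^{-1}(z) \in E(G)$ and $z = g(g^{-1}(z)) \in g(E(G))$.

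For part~(4) I would first exclude a finite Julia set with at least three points: by the argument of part~(3) it would be completely invariant, each $g$ would permute it, and since a Möbius map is determined by its values at three points the induced homomorphism from $G$ to the symmetric group of $J(G)$ would be injective; then $G$ would be finite, hence a normal family, forcing $J(G) = \emptyset$, a contradiction. Granting $J(G)$ infinite, both perfectness and the density of repelling fixed points are the classical output of the normal-families theory for rational semigroups in~\cite{HM1}: by part~(2) the backward orbit of any non-exceptional Julia point is dense in $J(G)$, and a Montel-plus-fixed-point count produces, near any prescribed point of $J(G)$, repelling fixed points of elements of $G$, whose closure is thus all of $J(G)$ and which leaves no point of $J(G)$ isolated. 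I expect this last step --- guaranteeing that the accumulating fixed points are genuinely \emph{repelling} and that exceptional points do not obstruct density --- to be the main obstacle, and here I would invoke the established arguments of~\cite{HM1} rather than reprove them.
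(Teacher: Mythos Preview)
Your arguments for (1)--(3) are correct and in fact more self-contained than the paper's, which largely defers to references: the paper cites \cite{HM1} for (2) and, for (3), cites Proposition~2.17 of \cite{FMS} when $E(G)$ is finite while handling infinite $E(G)$ separately (observing that then every non-identity element is elliptic of finite order, so $G=G^{-1}$). Your pointwise treatment of (3)---showing each finite $G^{-1}(z)$ is completely invariant under every $g\in G$ because $g^{-1}$ is a bijection---neatly avoids that case split and is a genuinely cleaner route.

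The gap is in (4). You ultimately invoke \cite{HM1} for the density of repelling fixed points, but as the paper itself remarks in the introduction, Hinkkanen and Martin treat only rational semigroups containing at least one map of degree $\geq 2$; their arguments do not directly cover purely M\"obius semigroups. The paper instead cites \cite{StankewitzRepelDense2}, a later paper written precisely to establish this density in the M\"obius (and entire) setting. Your exclusion of a finite Julia set via the injection $G\hookrightarrow\mathrm{Sym}(J(G))$ is a nice preliminary step not present in the paper, but the ``Montel-plus-fixed-point count'' you sketch afterward is exactly where the degree-$\geq 2$ hypothesis is ordinarily used (to produce many preimages and hence many fixed points), and closing that gap for M\"obius maps is the substance of \cite{StankewitzRepelDense2} rather than \cite{HM1}.
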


\begin{proof}
(1) follows from the definition of $J_{ker}(G)$ and the fact that $J(G)$ is backward invariant under $G$.  (2) is shown in~\cite{HM1}.  Because $E(G)$ is clearly backward invariant, (3) follows from Proposition 2.17 in~\cite{FMS} when $E(G)$ is finite.  Now suppose $E(G)$ is infinite.  Since any three points in $E(G)$ must have finite backward orbits under each $g \in G$, we must then have each $g\neq Id$ in $G$ is elliptic of finite order.  This implies then that $g^{-1} \in G$ for each $g \in G$ and thus $G=G^{-1}$.  Hence $E(G)$ which is backward invariant under $G$ must also be forward invariant under $G$.  (4) is shown in~\cite{StankewitzRepelDense2}.

\end{proof}

Our focus will be on M\"obius semigroups and will require use of the following fundamental classification of maps in $\mathcal{M}$, the group of all M\"obius maps on $\CC$.
We recall the following definitions and results which can be found in
\cite{BeardonGroups}.
For every complex matrix
$$M=\left(\begin{array}{c c} a \ \  b\\c\  \ d \end{array}\right)$$
with $ad-bc\ne 0$ there is a corresponding $m(z)=(az+b)/(cz+d)$ in
$\mathcal{M}$.  We write $tr(M)=a+d$ and $det(M)=ad-bc$
for the trace and determinant of $M$.
Since $m(z)$ determines $M$ up to a scalar factor, we may
unambiguously define $tr^2[m]=tr^2(M)/det(M).$
This invariant of $m(z)$ determines its dynamical properties,
as follows.

\begin{theorem}[Classification of M\"obius maps $m \in \mathcal{M}$]
Let $Id$ denote the identity map on $\CC$ and
let $m \in \mathcal{M}$, $m \neq Id.$  Then
\begin{enumerate}
\item $tr^2[m]=4$ if and only if $m$ is \underline{parabolic}, i.e., $m$ has only one
  neutral fixed point and $m$ is conjugate to the translation $z\mapsto
  z+1$.
\item $tr^2[m]\in [0,4)$ if and only if $m$ is \underline{elliptic}, i.e., $m$ has
  two neutral fixed points and $m$ is conjugate to a rotation $z\mapsto
  kz$ with $|k|=1$.
\item $tr^2[m]\in (4,+\infty)$ if and only if $m$ is \underline{hyperbolic}, i.e., $m$ has
  an attracting and a repelling fixed point and $m$ is conjugate to
  $z\mapsto kz$ with $k\in \R$ and $|k|>1$.
\item $tr^2[m]\notin
  [0,+\infty)$ if and only if $m$ is \underline{strictly loxodromic}, i.e.,
$m$ has an attracting and a repelling fixed point and $m$ is conjugate to
$z\mapsto kz$ with $|k|>1$ and $k \notin \R$.
\end{enumerate}
\end{theorem}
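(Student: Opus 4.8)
The plan is to exploit the conjugation-invariance of both the number $tr^2[m]$ and the four named dynamical types, so that each ``if and only if'' reduces to a single explicit computation on a normal form. First I would record that $tr^2[\,\cdot\,]$ is a conjugacy invariant: if $g=p\,m\,p^{-1}$ with $p\in\mathcal{M}$ represented by a matrix $P$, then $g$ is represented by $PMP^{-1}$, and since similarity preserves both the trace and the determinant of a $2\times2$ matrix, we get $tr^2[g]=tr^2[m]$. I would likewise note that the defining features of each type (the number and nature of the fixed points, together with the shape of the normalized multiplier) are unchanged by an overall conjugacy. Hence it suffices to establish the equivalences for one representative of each conjugacy class.

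The second step is to produce those representatives. Solving $m(z)=z$ amounts to the quadratic $cz^{2}+(d-a)z-b=0$ on $\CC$, which shows that every $m\neq Id$ has either exactly one or exactly two fixed points. When there are two, conjugating by the M\"obius map that sends them to $0$ and $\infty$ puts $m$ in the form $z\mapsto kz$ with $k\in\C\setminus\{0,1\}$, where the ambiguity $k\sim 1/k$ corresponds to interchanging the two fixed points. When there is only one, conjugating it to $\infty$ gives a map $z\mapsto z+b$ with $b\neq 0$, and a further scaling conjugacy normalizes $b=1$, yielding $z\mapsto z+1$.

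The third step is the computation. For $z\mapsto z+1$, represented by $\left(\begin{smallmatrix}1&1\\0&1\end{smallmatrix}\right)$, one reads off $tr^{2}=4$, which gives case (1). For $z\mapsto kz$, represented by $\left(\begin{smallmatrix}k&0\\0&1\end{smallmatrix}\right)$, one computes
\[ tr^{2}[m]=\frac{(k+1)^{2}}{k}=k+\frac{1}{k}+2. \]
Writing $k=re^{i\theta}$, so that $k+1/k=(r+1/r)\cos\theta+i\,(r-1/r)\sin\theta$, the remaining work is to identify the preimage of each target set under $k\mapsto k+1/k+2$: the value is real precisely when $\sin\theta=0$ or $r=1$; it lies in $[0,4)$ exactly for $|k|=1$ with $k\neq 1$ (elliptic, giving $2\cos\theta+2$); it lies in $(4,\infty)$ exactly for $k$ real and positive with $k\neq 1$ (hyperbolic, by the arithmetic--geometric mean inequality); and it lies off $[0,\infty)$ in every remaining case (strictly loxodromic). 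Matching these ranges against the multiplier descriptions $|k|=1$, $k\in\R$, and $k\notin\R$ then closes both directions of each equivalence.

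I expect the main obstacle to be precisely this last analysis: cleanly pinning down the preimages of $[0,4)$, $\{4\}$, $(4,\infty)$, and $\C\setminus[0,\infty)$ under the trace function $k\mapsto k+1/k+2$ while respecting the $k\sim 1/k$ identification and handling the borderline values carefully (in particular, reconciling a negative-real multiplier, which the formula places off $[0,\infty)$, with the stated normal forms for the hyperbolic and strictly loxodromic cases). By contrast, the fixed-point count and the reduction to the normal forms $z\mapsto z+1$ and $z\mapsto kz$ are routine once phrased through the quadratic $cz^{2}+(d-a)z-b$, so the genuine content concentrates in the one-variable complex analysis of $tr^{2}$ on the model maps.
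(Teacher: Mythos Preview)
The paper does not prove this theorem; it is merely quoted as a classical result from Beardon's book on discrete groups, so there is no in-paper argument against which to compare your proposal. Your outline is exactly the standard proof one finds in that reference: invoke conjugacy-invariance of $tr^{2}$, reduce every non-identity $m$ to one of the normal forms $z\mapsto z+1$ or $z\mapsto kz$, compute $tr^{2}=(k+1)^{2}/k=k+1/k+2$ on the latter, and read off the four ranges.

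The difficulty you anticipate with a negative real multiplier is genuine and reflects a small imprecision in the statement as recorded here rather than a gap in your argument: the hyperbolic normal form should really require $k>1$ (equivalently $k\in\R_{>0}\setminus\{1\}$), not merely $k\in\R$ with $|k|>1$, and correspondingly the strictly loxodromic condition should read $k\notin\R_{>0}$. Indeed $z\mapsto -2z$ has $tr^{2}=-1/2\notin[0,\infty)$ yet is conjugate among dilations only to $z\mapsto -2z$ and $z\mapsto -\tfrac{1}{2}z$, both with real multiplier, so the literal wording of (3) and (4) cannot both hold for it. With the corrected reading your case analysis on $k\mapsto k+1/k+2$ closes cleanly.
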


Finally, a \textit{loxodromic} map is any $m \in \mathcal{M}$ which is either
hyperbolic or strictly loxodromic, i.e., a map having an attracting and a repelling fixed point.
Note, however, that this terminology is not universal.

\section{Full backward iteration algorithm}\label{SectionFull}

Let $a \in \CC$ be fixed.  For each $i_j \in \{1, \dots, k\}$, we write  $z_{i_1, i_2, \dots, i_n}=f^{-1}_{i_n}\circ \dots \circ f^{-1}_{i_1}(a)$.
Note that $z_{i_1, i_2, \dots, i_n}$ depends on the initial choice of $a$ though we suppress this dependence in our notation.

Throughout we assume $b=(b_1, \dots, b_k)$ is a probability vector, i.e., each $b_j>0$ and $\sum_{j=1}^k b_j =1$.

Denoting by $\delta_z$ the unit point mass measure at $z$, we  define probability measures $\mu_n^{a,b}$ on $\CC$ as follows:

$$\mu_1^{a,b}
=\sum_{i_1=1}^k b_{i_1} \delta_{f^{-1}_{i_1}(a)}$$
and, in general, for $n>1$
\begin{align*}
\mu_n^{a,b}
&= \sum_{i_1, i_2, \dots, i_n =1}^k b_{i_1}\cdots b_{i_n} \delta_{z_{i_1, i_2, \dots, i_n}}\\
&=\sum_{i_1, i_2, \dots, i_n =1}^k b_{i_1} \cdots b_{i_n} \delta_{f^{-1}_{i_n}\circ \dots \circ f^{-1}_{i_1}(a)}.
\end{align*}

Following~\cite{Boyd2} and~\cite{Su3}, we define a bounded linear operator $T=T_G^b$ on the space $C(\CC)$ of continuous functions (endowed with the sup norm $\|\cdot\|_\infty$) on $\CC$ by

\begin{equation}\label{Tdef}
(T \phi)(z) = \int_\CC \phi(s) \, d\mu_1^{z,b}(s) = \sum_{i_1=1}^k b_{i_1} \phi(f^{-1}_{i_1} z),
\end{equation}

noting $\|T\|=1$.  Hence, $(T \phi)(z)$ is a weighted average of $\phi$ evaluated at all $k$ preimages of $z$ under all generators $f_j$.

Letting $\mathcal{P}(\CC)$ denote the space of probability Borel measures on $\CC$, and noting that it is a compact metric space in the topology of weak* convergence,
we have that the adjoint $T_b^*:\mathcal{P}(\CC) \to \mathcal{P}(\CC)$  of $T^{b}$ is given by
\begin{equation}
(T_b^* \rho)(A)=\int \mu_1^{z,b}(A)\, d\rho(z)
\end{equation}
for all Borel sets $A \subseteq \CC$.  Using the operator notation $\langle \phi, \rho \rangle = \int \phi \, d\rho$ we express the action of the adjoint as $\langle T \phi, \rho \rangle = \langle \phi, T^* \rho \rangle$.  Note that the map $\CC \to \mathcal{P}(\CC)$ given by $z \mapsto \mu_1^{z,b}$ is continuous since for a sequence $z_n \to z_0$ in $\CC$, we have $\langle \phi, \mu_1^{z_n,b} \rangle = (T\phi)(z_n) \to (T\phi)(z_0)=\langle \phi, \mu_1^{z_0,b} \rangle$ for any $\phi \in C(\CC)$, i.e., $\mu_1^{z_n,b} \to \mu_1^{z,b}$.

\vskip.1in

We give a claim which is obtained by using results from \cite{SumiRandom}.
\begin{claim}\label{SumiClaim}
Let $G=\langle f_1, \dots, f_k \rangle$ be a M\"obius semigroup and let $b=(b_1, \dots, b_k)$ be a probability vector. When $J_{ker }(G)=\emptyset $ and $J(G)\neq \emptyset $, we have the following (a)(b)(c)(d)(e).

(a) $\sharp J(G)\geq 3.$

(b) There exists a unique minimal set $L$ of $G$, where we say that a non-empty compact subset $L$ of $\hat{\Bbb{C}}$ is a minimal set of $G$ if $L=\overline{\cup _{g\in G}\{ g(z)\} }$ for each $z\in L.$

(c) $L\cap F(G)\neq \emptyset .$

(d) $L=\overline{\{ z\in L\mid \exists g\in G \mbox{ s.t. } z \mbox{ is an attracting fixed point of } g\}}.$

(e) Let $b$ be a probability vector.
Then there exists a unique Borel probability measure $\nu ^{b}$ on $\hat{\Bbb{C}}$ such that $M^{n}(\phi )(z)\rightarrow \int \phi \, d\nu^{b}$ uniformly on $\hat{\Bbb{C}}$, where, $M=M_G^b$ is the transition operator with respect to $b$ given by $M(\phi)(z)= \sum_{j=1}^k b_j \phi(f_j(z))$, for any $\phi \in C(\CC)$.
  Also, $\nu^{b}$ is the unique Borel probability measure on $\CC$ such that $M^{\ast }\nu^{b}=\nu^{b}.$ Also, the support of $\nu^{b}$ is equal to $L.$
\end{claim}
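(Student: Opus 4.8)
The plan is to obtain (a) and (c) by elementary means and to draw the deeper statements (b), (d), (e) from Sumi's theory of mean stable random holomorphic dynamics in~\cite{SumiRandom}.

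\emph{Part (a).} First I would rule out $\sharp J(G) \le 2$. Since $J(G)$ is backward invariant and each $g^{-1}$ is a homeomorphism of $\CC$, a finite $J(G)$ with $m$ points satisfies $g^{-1}(J(G)) = J(G)$ for every $g \in G$ (an injective image of an $m$-point set inside an $m$-point set is the whole set); hence $J(G)$ would be completely invariant, in particular forward invariant. By part (1) of Proposition~\ref{facts} this forces $J(G) \subseteq J_{ker}(G)$, contradicting $J_{ker}(G) = \emptyset$ together with $J(G) \neq \emptyset$. Thus $J(G)$ is infinite and $\sharp J(G) \ge 3$.

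\emph{Existence of minimal sets and part (c).} A minimal set exists by a routine Zorn's lemma argument applied to the family of nonempty compact forward invariant subsets of $\CC$ ordered by reverse inclusion (the intersection of a descending chain is nonempty, compact and forward invariant), and a minimal element $L$ of this family satisfies $\overline{G(z)} = L$ for each $z \in L$, matching the definition in the statement. The defining property gives $g(L) \subseteq L$ for all $g \in G$, so $L$ is forward invariant. If we had $L \subseteq J(G)$, then by part (1) of Proposition~\ref{facts}, which identifies $J_{ker}(G)$ as the largest forward invariant subset of $J(G)$, we would get $\emptyset \neq L \subseteq J_{ker}(G) = \emptyset$, a contradiction. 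Therefore $L \cap F(G) \neq \emptyset$, which is (c), and indeed this holds for every minimal set.

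\emph{Parts (d), (e) and uniqueness (b).} This is the substantive content, and where I would invoke~\cite{SumiRandom}. Writing the hypothesis as $\bigcup_{g \in G} g^{-1}(F(G)) = \CC$, every point of $\CC$ is carried into the (forward invariant) Fatou set by some element of $G$; together with $\sharp J(G) \ge 3$ this puts $G$ in the mean stable regime. Sumi's cooperation/mean-stability results then yield that for each $\phi \in C(\CC)$ the iterates $M^n \phi$ converge uniformly to a constant limit and that there is a unique $M^{\ast}$-invariant probability measure $\nu^b$, whose support is a minimal set; this is (e). Since each minimal set carries an $M^{\ast}$-invariant probability measure (Krylov--Bogolyubov applied on a forward invariant compact set, together with minimality), uniqueness of $\nu^b$ forces the minimal set to be unique, giving (b) with $L = \mathrm{supp}(\nu^b)$. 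Statement (d) is the topological shadow of mean stability: near the Fatou point of $L$ furnished by (c) the averaged dynamics contracts, producing genuinely attracting elements, and minimality spreads their fixed points throughout $L$.

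I would regard (e) --- the uniform convergence $M^n \phi \to \int \phi \, d\nu^b$ and the uniqueness of the stationary measure --- as the main obstacle; the topological statements are then either elementary or follow from it. As an independent check on (b) that highlights the M\"obius structure, I note that uniqueness can be recovered directly from (c) and (d): if $L_1 \ne L_2$ were two minimal sets (necessarily disjoint, since a common point $z$ would force $L_1 = \overline{G(z)} = L_2$), choose by (d) a loxodromic $g \in G$ with attracting fixed point $\alpha \in L_1$, and pick $z \in L_2$ distinct from the repelling fixed point $\beta_g$ of $g$ (possible because $L_2$ meets $F(G)$ by (c) while $\beta_g \in J(G)$). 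Then $g^n(z) \to \alpha$, and since $g^n(z) \in L_2$ with $L_2$ closed we obtain $\alpha \in L_1 \cap L_2$, a contradiction.
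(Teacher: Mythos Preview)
Your proposal is correct and follows essentially the same strategy as the paper: both derive the deep parts (b), (d), (e) from Theorem~3.15 of~\cite{SumiRandom}, and both prove (c) by observing that a minimal set contained in $J(G)$ would lie in $J_{ker}(G)$. The differences are minor but worth noting. For (a) you give a self-contained argument (a finite $J(G)$ is completely invariant under M\"obius generators, hence forward invariant, hence in $J_{ker}(G)$), whereas the paper simply cites part~(3) of Theorem~3.15; your argument is cleaner here. Conversely, the paper is more specific about exactly which items of Theorem~3.15 are being used --- parts (2), (3), (8), (10), (13), (17), (21) --- and makes explicit the bridge step you leave implicit: from (a) and Proposition~\ref{facts}(4) one extracts a loxodromic element of $G$, and it is this that makes part~(21) (uniqueness of the minimal set) applicable. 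Your independent M\"obius-specific check on (b) via (c) and (d) is a nice addition; just be aware it presumes (d) is available for at least one minimal set prior to knowing uniqueness, which is indeed the case since Sumi's part~(17) applies to each minimal set under the standing hypothesis $J_{ker}(G)=\emptyset$.
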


\begin{proof}
We now apply parts (2), (3), (8), (10), (13), (17), and (21) of Theorem 3.15 in~\cite{SumiRandom} using the probability measure $\tau =\sum_{j=1}^k b_j \delta_{f_j}$ on the space of non-constant rational functions.  Part (a) follows from (3).  From (a) and Proposition~\ref{facts}(4) there exists a loxodromic map in $G$ and hence (b) follows from (21).  If (c) did not hold, then $L \subseteq J(G)$ would be contained in $J_{ker}(G)$ contradicting our assumption.  Part (d) follows from (17) noting that the $S_\tau$ in the reference is $L$ in this case.  Lastly, (e) follows from (2), (8), (10), (13), and (21).
\end{proof}

We note that the complexities involved in the proof of Theorem 3.15 of~\cite{SumiRandom} required a very delicate analysis based on the hyperbolic metric in this general setting (see Remark~\ref{Delicate}).
Now, we prove the following.

\begin{lemma}\label{Lset}
Let $G$ be a finitely generated Mobius semigroup such that $J_{ker}(G)=\emptyset$ and $J(G)\neq \emptyset.$ Then, the unique minimal set $L$ of $G$ is equal to $J(G^{-1}).$
\end{lemma}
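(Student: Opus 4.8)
The plan is to prove the two inclusions $L\subseteq J(G^{-1})$ and $J(G^{-1})\subseteq L$ separately, exploiting two properties of the unique minimal set: that $L$ is forward invariant under $G$ (for $z\in L$ we have $G(z)\subseteq\overline{G(z)}=L$), and that, being the \emph{unique} minimal set, it is contained in every nonempty closed forward-$G$-invariant subset of $\CC$. Indeed, if $K$ is such a set and $z\in K$, then $\overline{G(z)}$ is closed and forward invariant, hence contains a minimal set, which by the uniqueness in Claim~\ref{SumiClaim}(b) must be $L$; thus $L\subseteq K$.

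For the first inclusion I would observe that $J(G^{-1})$ is exactly such a set. It is nonempty: by Claim~\ref{SumiClaim}(a) and Proposition~\ref{facts}(4) the semigroup $G$ contains a loxodromic element $g$, so $g^{-1}\in G^{-1}$ is loxodromic and its repelling fixed point lies in $J(G^{-1})$. It is closed, and it is forward invariant under $G$ because $J(G^{-1})$ is backward invariant under each $g^{-1}\in G^{-1}$, which says precisely that $g(J(G^{-1}))\subseteq J(G^{-1})$ for every $g\in G$. Therefore $L\subseteq J(G^{-1})$.

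The reverse inclusion is the substantive part. The first step is to show that every attracting fixed point of a loxodromic element $g\in G$ lies in $L$; by the classification of M\"obius maps these attracting fixed points are exactly the repelling fixed points of the elements of $G^{-1}$. Here I would use Claim~\ref{SumiClaim}(c) to choose $z_0\in L\cap F(G)$; since the repelling fixed point $q$ of $g$ lies in $J(G)$ we have $z_0\neq q$, so $g^n(z_0)\to p$, the attracting fixed point, and as $L$ is closed and forward invariant this gives $p\in L$. I then split on the size of $J(G^{-1})$. If $\#J(G^{-1})\geq 3$, then Proposition~\ref{facts}(4) applied to $G^{-1}$ identifies $J(G^{-1})$ with the closure of the repelling fixed points of $G^{-1}$, all of which we have just placed in the closed set $L$, giving $J(G^{-1})\subseteq L$. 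If $J(G^{-1})$ is finite (the only alternative, again by Proposition~\ref{facts}(4)), then each $g\in G$ permutes the finite set $J(G^{-1})$ and, since $g(L)\subseteq L$ with $L$ finite, permutes $L$ as well; hence $J(G^{-1})\setminus L$ is forward-$G$-invariant, and were it nonempty it would be a nonempty closed forward-invariant set disjoint from $L$, contradicting that $L$ is contained in every such set. Either way $J(G^{-1})=L$.

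I expect the main obstacle to be the finite case. It genuinely occurs -- for example when the generators share a common attracting fixed point, $L$ can be a single point and $J(G^{-1})$ finite -- so the density of repelling fixed points supplied by Proposition~\ref{facts}(4) is unavailable there, and one must instead rely on the permutation structure of a finite invariant set together with the uniqueness of the minimal set to rule out $J(G^{-1})$ properly containing $L$.
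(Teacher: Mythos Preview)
Your proof is correct, and it diverges from the paper's in two interesting ways.

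For the inclusion $L\subseteq J(G^{-1})$, the paper simply invokes Claim~\ref{SumiClaim}(d): since $L$ is the closure of attracting fixed points of elements of $G$, and each such point is a repelling fixed point of an element of $G^{-1}$, the inclusion follows. You instead use the uniqueness of the minimal set to place $L$ inside every nonempty closed forward-$G$-invariant set, and then verify that $J(G^{-1})$ is one. Both are short; yours avoids (d) at the cost of re-deriving its content later when you show attracting fixed points lie in $L$.

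For the reverse inclusion the difference is more substantial. The paper splits on $\#L$. When $\#L\geq 3$ it appeals to the characterization of $J(G^{-1})$ as the smallest closed set with at least three points that is backward invariant under $G^{-1}$; since $L$ is such a set, $J(G^{-1})\subseteq L$. When $\#L\leq 2$, the paper shows $L\subset F(G)$, equips the relevant components of $F(G)$ with the hyperbolic metric, and uses Pick's Lemma to trap $J(G^{-1})$ inside arbitrarily small hyperbolic neighborhoods of $L$. Your argument splits instead on $\#J(G^{-1})$: in the infinite case you use density of repelling fixed points (Proposition~\ref{facts}(4)) together with your direct verification that attracting fixed points of $G$ lie in $L$; in the finite case you observe that injectivity of M\"obius maps forces each $g\in G$ to permute both $J(G^{-1})$ and its subset $L$, so $J(G^{-1})\setminus L$ is itself closed and forward invariant, and if nonempty would have to contain $L$, a contradiction. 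Your treatment of the finite case is more elementary than the paper's---no hyperbolic geometry is needed---while the paper's treatment of the large case is slightly slicker, bypassing the fixed-point density statement.
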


\begin{proof}
By (d) in Claim~\ref{SumiClaim}, we have $L\subseteq J(G^{-1})$ since any attracting fixed point of any $g \in G$ must be a repelling fixed point of $g^{-1}$.

We now consider the following two cases.
Case (i) $\sharp L\geq 3$. Case (ii) $\sharp L\leq 2$.

Suppose we have case (i).
Then $G(L)\subseteq L$ (thus $(G^{-1})^{-1}(L)\subseteq L$), $\sharp L\geq 3.$ Since $J(G^{-1})$ is a minimal backward invariant compact subset under $G^{-1}$ which has at least three elements, we obtain that $L\supseteq J(G^{-1}).$ Hence $L=J(G^{-1}).$

Suppose we have case (ii).
By (c) in Claim~\ref{SumiClaim} we choose $z \in L \cap F(G)$.  Since $\# L < +\infty$ and $F(G)$ is forward invariant under $G$, we see that $L = \overline{G(z)} = G(z) \subset F(G).$ By (a) in Claim~\ref{SumiClaim}, we can take hyperbolic distance on each connected component of $F(G)$ and for a small $\epsilon >0$ let $V_{\epsilon}$ be the $\epsilon$-hyperbolic neighborhood of $L$
(we consider connected components $\{ W_{i}\}$ of $F(G)$ which meet $L$ and we consider the $\epsilon $-hyperbolic neighborhood $A_{i,\epsilon }$ of $W_{i}\cap L$ in $W_{i}$ and let
$V_{\epsilon }=\cup _{i}A_{i,\epsilon }$.) Then $(G^{-1})^{-1}(\overline{V_\epsilon}) =G(\overline{V_{\epsilon}})\subseteq \overline{V_{\epsilon}}$ by Pick's Lemma and hence $J(G^{-1})\subseteq \overline{V_{\epsilon }}.$ Since $\epsilon$ is an arbitrary small number, it follows that $J(G^{-1})\subseteq L.$ Hence $J(G^{-1})=L.$
\end{proof}

Note that replacing the maps $f_j$ by their inverses $f_j^{-1}$ in the definition of the operator $M$ of Claim~\ref{SumiClaim} produces the operator $T$ given in~\eqref{Tdef}.  Hence for a M\"obius semigroup $G$ we see that $M_{G^{-1}}^b=T_G^b$, where $G^{-1} = \langle f_1^{-1}, \dots, f_k^{-1} \rangle$ is the inverse semigroup of $G$.  Thus applying Claim~\ref{SumiClaim} and Lemma ~\ref{Lset} to the inverse semigroup $G^{-1}$ we have the following result which parallels results for non-M\"obius rational semigroups found in~\cite{Boyd2, Su3, BarnsleyDemko}.

\begin{theorem}\label{FullMethod}
Let $G=\langle f_1, \dots, f_k \rangle$ be a M\"obius semigroup and let $b=(b_1, \dots, b_k)$ be a probability vector.  Suppose $J_{ker}(G^{-1}) = \emptyset$ and
$J(G^{-1})\neq \emptyset .$
 Then the measures $\mu_n^{a,b}$ converge weakly to a  Borel probability measure $\mu^{b}=\mu^{b}_G$ on $\CC$ independently of and uniform in $a \in \CC$.  Further, the support of $\mu^{b}$ is $J(G)$ and $T_b^*\mu^{b} = \mu^{b}$.  Further, $\mu^{b}$ is the unique Borel probability measure on $\CC$ such that $T_{b}^{\ast }\mu ^{b}=\mu ^{b}$.
\end{theorem}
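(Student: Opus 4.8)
The plan is to derive everything from Claim~\ref{SumiClaim} and Lemma~\ref{Lset} by applying them to the inverse semigroup, exactly as the preceding remark indicates. Writing $H := G^{-1} = \langle f_1^{-1}, \dots, f_k^{-1}\rangle$, the standing hypotheses $J_{ker}(G^{-1}) = \emptyset$ and $J(G^{-1}) \neq \emptyset$ are precisely $J_{ker}(H) = \emptyset$ and $J(H) \neq \emptyset$, so both results apply verbatim to $H$. The organizing observation, already recorded above, is that the transition operator of $H$ coincides with $T$: $M_H^b = M_{G^{-1}}^b = T_G^b = T$.

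First I would set up the bridge identity linking the measures $\mu_n^{a,b}$ to iterates of $T$. Unwinding the definition $(T\phi)(z) = \sum_{i=1}^k b_i\,\phi(f_i^{-1}z)$ and inducting on $n$ gives
$$(T^n\phi)(a) = \sum_{i_1,\dots,i_n=1}^k b_{i_1}\cdots b_{i_n}\,\phi\bigl(f_{i_n}^{-1}\circ\cdots\circ f_{i_1}^{-1}(a)\bigr) = \langle \phi, \mu_n^{a,b}\rangle$$
for every $\phi \in C(\CC)$ and every $a \in \CC$. The one point demanding care is matching the order of composition defining $z_{i_1,\dots,i_n} = f_{i_n}^{-1}\circ\cdots\circ f_{i_1}^{-1}(a)$ with the successive applications of $T$, so that $T$ (built from the $f_j^{-1}$) is correctly identified with the transition operator $M_{G^{-1}}^b$ of the inverse semigroup.

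Next I would read off the convergence and fixed-point claims. By Claim~\ref{SumiClaim}(e) applied to $H$, there is a unique Borel probability measure $\nu^b$ with $(T^n\phi)(z) = (M_H^b)^n(\phi)(z) \to \int\phi\,d\nu^b$ uniformly on $\CC$. Setting $\mu^b := \nu^b$ and combining with the bridge identity gives $\langle\phi,\mu_n^{a,b}\rangle = (T^n\phi)(a) \to \langle\phi,\mu^b\rangle$; since $T^n\phi$ converges to its constant limit uniformly on $\CC$, this limit is independent of the evaluation point and the convergence is uniform in $a$, which is exactly the asserted weak convergence independently of and uniform in $a$. The identity $T_b^{\ast}\mu^b = \mu^b$ and its uniqueness are then immediate, since Claim~\ref{SumiClaim}(e) characterizes $\nu^b$ as the unique probability measure fixed by $(M_H^b)^{\ast} = T_b^{\ast}$.

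Finally, for the support I would invoke the support assertion of Claim~\ref{SumiClaim}(e), which says that the support of $\mu^b$ equals the unique minimal set $L$ of $H = G^{-1}$, and then apply Lemma~\ref{Lset} to $H$ to identify $L = J(H^{-1}) = J(G)$, whence the support of $\mu^b$ is $J(G)$. The substantive dynamical content is entirely absorbed into Claim~\ref{SumiClaim} and Lemma~\ref{Lset}, so the only genuine obstacle here is organizational: verifying the composition order in the bridge identity so that the duality between the backward-iteration measures for $G$ and the forward transition operator for $G^{-1}$ is set up correctly, after which all four assertions fall out of the two cited results.
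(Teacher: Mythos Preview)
Your proposal is correct and follows exactly the route the paper takes: the paper's ``proof'' is the sentence preceding the theorem, which says to apply Claim~\ref{SumiClaim} and Lemma~\ref{Lset} to the inverse semigroup $G^{-1}$ using the identification $M_{G^{-1}}^b = T_G^b$, and you have simply unpacked that sentence with the bridge identity $(T^n\phi)(a)=\langle\phi,\mu_n^{a,b}\rangle$ and the support identification $L=J(H^{-1})=J(G)$ made explicit.
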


\begin{remark}
Theorem~\ref{FullMethod} provides the justification for the ``full backward iteration algorithm" used to graphically approximate $J(G)$.  This method simply plots the $k^n$ (not necessarily distinct) points in the support of $\mu_n^{a,b}$.  We note that this iterative process plots all $k$ inverses of each of the $k^{n-1}$ points in the support of $\mu_{n-1}^{a,b}$ to generate the support of $\mu_n^{a,b}$.  Also, note that the support of $\mu_n^{a,b}$ is independent of $b$ (but not of $a$), which merely adjusts the weights on the point masses.
\end{remark}

\section{Random backward iteration algorithm}

Let $\Sigma_k^+ = \prod_{n=1}^\infty \{1, \dots, k\}$ denote the space of one-sided sequences on $k$ symbols, regarded with the usual topology and $\sigma$-algebra of Borel sets.   Now let
$P_b$ be the Bernoulli measure on $\Sigma_k^+$ which is then given on basis elements as follows:  for fixed $j_n$ for $n=1, \dots, m$, we have $P_b(\{(i_1, i_2, \dots) \in \Sigma_k^+:i_n=j_n \textrm{ for all } n=1, \dots, m\})= b_{j_1} \cdots b_{j_m}$.

Let us define a random walk as follows.  Starting at a point $z_0=a \in \CC$, we note that there are $k$ preimages of $a$ under the generators of the semigroup $G$.  We randomly select $z_1$ to be $f_j^{-1}(a)$ with probability $b_j$.  Likewise, $z_2$ is randomly selected to be one of the $k$ preimages of $z_1$.  Continue in this fashion to generate what we call a \textit{random backward orbit} $\{z_n\}$ of $z_0=a$ under the semigroup $G$.

Utilizing the notation introduced in Section~\ref{SectionFull}, we see that $\Sigma_k^+$ generates the entire set of backward orbits starting at $z_0=a$ by letting the sequence $(i_1, i_2, \dots)$ generate the backward orbit $\{g_{i_n}\circ \dots \circ g_{i_1}(z_0)\}=\{z_{i_1, i_2, \dots, i_n}\}$.

Formally, we define our random walk $\{z_n\}$ in terms of random variables $\{Z_n\}$ given as follows.  For each $n \in \N$, we let $Z_n=Z_n^b:(\Sigma_k^+,P_b) \to \CC$ by $Z_n(i_1, i_2, \dots) = z_{i_1, i_2, \dots, i_n}$ and $Z_0 \equiv a$.  Hence, $Z_{n+1}(i_1, i_2, \dots)=f^{-1}_{i_{n+1}}Z_n(i_1, i_2, \dots)$.

The proof given for the analogous result in~\cite{RandomBackStankewitzSumi} shows the following.

\begin{claim}\label{Markov}
Let $G=\langle f_1, \dots, f_k \rangle$ be a M\"obius semigroup, let $a \in \CC$, and let $b=(b_1, \dots, b_k)$ be a probability vector.  Then the stochastic process $\{Z_n:n=0, 1, 2, \dots\}$ forms a Markov process with transition probabilities $\{\mu_1^{z,b}\}$, i.e., for each Borel set $B \subseteq \CC$ we have $P_b(\{Z_{n+1} \in B|Z_0, \dots, Z_n\}) = \mu_1^{Z_n,b}(B)$.
\end{claim}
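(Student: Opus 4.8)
The plan is to verify the defining properties of conditional expectation directly from the product (Bernoulli) structure of $P_b$ on $\Sigma_k^+$. The two structural observations that drive everything are: each $Z_m$ depends only on the first $m$ coordinates of the sequence $(i_1, i_2, \dots)$, so that $\sigma(Z_0, \dots, Z_n)$ is contained in the $\sigma$-algebra $\mathcal{F}_n$ generated by the first $n$ coordinate projections; and the recursion $Z_{n+1} = f^{-1}_{i_{n+1}}(Z_n)$ exhibits the next state as a deterministic function of the current state $Z_n$ together with the fresh symbol $i_{n+1}$, which under $P_b$ is independent of $\mathcal{F}_n$ and satisfies $P_b(i_{n+1}=j)=b_j$.

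First I would check that $\mu_1^{Z_n,b}(B)$ is an admissible candidate, i.e.\ that it is $\sigma(Z_0,\dots,Z_n)$-measurable. Writing $h(z)=\mu_1^{z,b}(B)=\sum_{j=1}^k b_j \mathbf{1}_B(f^{-1}_j(z))$, this holds because each $f^{-1}_j$ is continuous and $\mathbf{1}_B$ is Borel, so $h$ is a Borel function of $z$; hence $\mu_1^{Z_n,b}(B)=h(Z_n)$ is $\sigma(Z_n)$-measurable and a fortiori measurable with respect to $\sigma(Z_0,\dots,Z_n)$.

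Second comes the main computation. For every cylinder set $C \in \mathcal{F}_n$ I would expand $\mathbf{1}_{\{Z_{n+1}\in B\}} = \mathbf{1}_B(f^{-1}_{i_{n+1}}(Z_n)) = \sum_{j=1}^k \mathbf{1}_{\{i_{n+1}=j\}}\,\mathbf{1}_B(f^{-1}_j(Z_n))$ and integrate against $\mathbf{1}_C$. Since $\mathbf{1}_C\,\mathbf{1}_B(f^{-1}_j(Z_n))$ is $\mathcal{F}_n$-measurable while $\mathbf{1}_{\{i_{n+1}=j\}}$ is independent of $\mathcal{F}_n$ with mean $b_j$ under $P_b$, factoring the expectation yields $\int_C \mathbf{1}_{\{Z_{n+1}\in B\}}\,dP_b = \sum_{j=1}^k b_j \int_C \mathbf{1}_B(f^{-1}_j(Z_n))\,dP_b = \int_C \mu_1^{Z_n,b}(B)\,dP_b$. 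Because the cylinder sets form a $\pi$-system generating $\mathcal{F}_n$ (and contain the whole space), while both $C\mapsto \int_C \mathbf{1}_{\{Z_{n+1}\in B\}}\,dP_b$ and $C\mapsto \int_C \mu_1^{Z_n,b}(B)\,dP_b$ are finite measures in $C$, a standard Dynkin/monotone-class argument extends this identity to all of $\mathcal{F}_n$, and in particular to all $C\in\sigma(Z_0,\dots,Z_n)$. Together with the measurability from the first step, this says exactly that $\mu_1^{Z_n,b}(B)$ is a version of $P_b(\{Z_{n+1}\in B\mid Z_0,\dots,Z_n\})$, which is the asserted Markov property with transition probabilities $\{\mu_1^{z,b}\}$.

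I do not expect a genuine obstacle: the Markov property is essentially built into the construction, since the new symbol $i_{n+1}$ is independent of the past and $Z_{n+1}$ is a deterministic function of $(Z_n,i_{n+1})$. The only point needing care is the $\sigma$-algebra bookkeeping, namely that $\sigma(Z_0,\dots,Z_n)$ may be strictly smaller than $\mathcal{F}_n$ (the maps $Z_m$ need not be injective, so they can lose information). This is harmless precisely because we verify the integral identity over the larger algebra $\mathcal{F}_n$, where the convenient cylinder-set $\pi$-system is available, while the candidate conditional expectation is already measurable with respect to the smaller algebra $\sigma(Z_0,\dots,Z_n)$.
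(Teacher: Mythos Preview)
Your argument is correct and complete: the independence of the fresh coordinate $i_{n+1}$ from $\mathcal{F}_n$ under the Bernoulli measure $P_b$, together with the recursion $Z_{n+1}=f_{i_{n+1}}^{-1}(Z_n)$, yields the Markov property exactly as you compute, and your handling of the possible gap between $\sigma(Z_0,\dots,Z_n)$ and $\mathcal{F}_n$ via the tower property is the right way to finish. The paper itself does not spell out a proof here but simply invokes the argument for the analogous claim in~\cite{RandomBackStankewitzSumi}; your direct verification is the standard one and is essentially what that reference carries out.
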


The main result can now be stated in terms of the probability measures, defined for each $n \in \N$ and $(i_1, i_2, \dots) \in \Sigma_k^+$, by
\begin{equation}
\mu_{i_1, \dots, i_{n}}^{a} = \frac1n \sum_{j=1}^n  \delta_{z_{i_1, i_2, \dots, i_j}}.
\end{equation}

\begin{theorem}\label{main}
Let $G=\langle f_1, \dots, f_k \rangle$ be a M\"obius semigroup, let $a \in \CC$, and let $b=(b_1, \dots, b_k)$ be a probability vector.  Suppose $J_{ker}(G^{-1}) = \emptyset$ and
$J(G^{-1})\neq \emptyset .$
Then, for $P_b$ a.a. $(i_1, i_2, \dots) \in \Sigma_k^+$, the probability measures $\mu_{i_1, \dots, i_{n}}^{a}$ converge weakly to $\mu^{b}$ in $\mathcal{P}(\CC)$.
\end{theorem}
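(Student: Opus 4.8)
The plan is to reduce the weak convergence of the empirical measures $\mu_{i_1,\dots,i_n}^a=\frac1n\sum_{j=1}^n\delta_{Z_j}$ to a scalar almost-sure law of large numbers and then prove that law of large numbers by a martingale decomposition. Concretely, writing $c=\langle\phi,\mu^b\rangle$, I would first show that for each fixed $\phi$ in a countable dense subset of $C(\CC)$ one has
$$\frac1n\sum_{j=1}^n \phi(Z_j)\longrightarrow c\qquad P_b\text{-a.s.}$$
Since $\CC$ is compact metric, $C(\CC)$ is separable; intersecting the countably many full-measure sets and using that each $\mu_{i_1,\dots,i_n}^a$ is a probability measure (so $|\langle\phi-\psi,\cdot\rangle|\le\|\phi-\psi\|_\infty$), a standard density argument then upgrades the scalar convergence to weak convergence of measures on a single full-measure set. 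So the real content is the law of large numbers for the \emph{dependent} variables $\phi(Z_j)$.

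To prove it, write $T=T_G^b$ and use the Markov property of Claim~\ref{Markov}, which gives $E[\psi(Z_i)\mid\mathcal F_{i-1}]=(T\psi)(Z_{i-1})$ for the filtration $\mathcal F_i=\sigma(Z_0,\dots,Z_i)$. For each fixed integer $m\ge1$, telescoping along $\phi(Z_j),(T\phi)(Z_{j-1}),\dots,(T^m\phi)(Z_{j-m})$ yields
$$\phi(Z_j)=(T^m\phi)(Z_{j-m})+\sum_{l=0}^{m-1}D_{j-l}^{(l)},\qquad D_i^{(l)}:=(T^l\phi)(Z_i)-(T^{l+1}\phi)(Z_{i-1}).$$
For each fixed $l$, the sequence $\{D_i^{(l)}\}_i$ is a martingale difference sequence with respect to $\{\mathcal F_i\}$, bounded by $2\|\phi\|_\infty$ since $\|T\|=1$. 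By the martingale strong law of large numbers (Kronecker's lemma applied to the $L^2$-bounded martingale $\sum_i D_i^{(l)}/i$) we get $\frac1n\sum_{i=1}^n D_i^{(l)}\to0$ almost surely for each $l$; summing the finitely many values $l=0,\dots,m-1$ and discarding the $O(m/n)$ boundary terms (those $j\le m$) shows the entire martingale part of $\frac1n\sum_{j=1}^n\phi(Z_j)$ vanishes almost surely.

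It then remains to control the drift $\frac1n\sum_{j} (T^m\phi)(Z_{j-m})$, and this is where the hypotheses enter decisively. Theorem~\ref{FullMethod}, whose proof identifies $(T^n\phi)(a)=\langle\phi,\mu_n^{a,b}\rangle$, gives under $J_{ker}(G^{-1})=\emptyset$ and $J(G^{-1})\neq\emptyset$ that $T^m\phi\to c$ \emph{uniformly} on $\CC$. Hence, given $\epsilon>0$, I fix $m$ so large that $\|T^m\phi-c\|_\infty<\epsilon$; crucially this single $m$ works regardless of where the backward orbit point $Z_{j-m}$ lies. Combining the three pieces gives $\limsup_n\bigl|\frac1n\sum_{j=1}^n\phi(Z_j)-c\bigr|\le\epsilon$ almost surely, and taking $\epsilon=1/r\downarrow0$ over $r\in\N$ (intersecting the corresponding full-measure events) completes the scalar law.

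The main obstacle is exactly the drift term, and what makes it tractable is the uniformity of $T^m\phi\to c$ from Theorem~\ref{FullMethod}: mere pointwise convergence would be useless since $\{Z_{j-m}\}$ can wander throughout $\CC$. The delicate point is the order of limits—for each fixed $m$ one first sends $n\to\infty$ through the martingale law of large numbers, and only afterward sends $m\to\infty$ (equivalently $\epsilon\to0$) through the uniform ergodicity. Notably, no rate of convergence for $T^m\phi\to c$ is required, which is fortunate since Claim~\ref{SumiClaim}(e) supplies only uniform, and not geometric, convergence.
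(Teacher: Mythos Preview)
Your argument is correct. The reduction to a scalar law of large numbers for a countable dense family in $C(\CC)$, followed by the $\|\cdot\|_\infty$--density upgrade, matches the paper exactly. Where you diverge is in the proof of the scalar statement itself: the paper invokes the Furstenberg--Kifer theorem (Theorem~1.4 of \cite{FurstKifer}) as a black box, using from Theorem~\ref{FullMethod} only the \emph{uniqueness} of the $T_b^*$--invariant measure $\mu^b$, whereas you give a self-contained proof via the telescoping martingale decomposition $\phi(Z_j)=(T^m\phi)(Z_{j-m})+\sum_{l=0}^{m-1}D_{j-l}^{(l)}$, the martingale SLLN for each bounded difference sequence $\{D_i^{(l)}\}$, and the \emph{uniform} convergence $T^m\phi\to\langle\phi,\mu^b\rangle$ (also supplied by Theorem~\ref{FullMethod} via Claim~\ref{SumiClaim}(e)). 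Your route is more elementary in that it avoids the external reference and exposes the mechanism, but it draws on a strictly stronger input (uniform ergodicity rather than mere uniqueness of the stationary measure); the paper's route is shorter and modular, and in principle applies in settings where one has uniqueness without uniform convergence. Both are valid here since Theorem~\ref{FullMethod} furnishes both ingredients.
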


By using Theorem~\ref{FullMethod}, the proof of Theorem~\ref{main} is now nearly identical with that given for the proof of the analogous result in~\cite{RandomBackStankewitzSumi}, but is given in Appendix~\ref{AppProof} of this paper for completeness.  See~\cite{HawkinsTaylor} for the corresponding statement and proof in the case of iteration of one function of a rational map of degree two or more, which is the model for the analogous work here.

\begin{remark}
Theorem~\ref{main} provides the justification for the ``random backward iteration algorithm" used to graphically approximate $J(G)$.  This method simply plots, for large $n$, the $n$ points in the support of $\mu_{i_1, \dots, i_{n}}^{a}$, i.e., the points of a random backward orbit, where $(i_1, i_2, \dots) \in \Sigma_k^+$ is randomly selected according to $P_b$.  If $a \notin J(G)$, then it is often appropriate to not plot the first hundred or so points in the random backward orbit since the earlier points in the orbit might not be very close to $J(G)$.
\end{remark}
By Claim~\ref{SumiClaim}, Lemma~\ref{Lset} and \cite[Theorem 3.15]{SumiRandom}, we can show the following result.
\begin{theorem}
\label{converge}
Under the assumptions of Theorem~\ref{main}, let $a\in \CC $ and let $b=(b_{1},\ldots, b_{k})$ be a
probability vector. Then for $P_{b}$ a.a. $(i_{1},i_{2},\ldots )\in \Sigma _{k}^{+}$,
we have $d(z_{i_{1},\ldots i_{n}}, J(G))\rightarrow 0$ as $n\rightarrow \infty .$
\end{theorem}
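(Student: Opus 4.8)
The plan is to recognize the random backward orbit $\{z_{i_1,\dots,i_n}\}$ of $G$ as a random \emph{forward} orbit of the inverse semigroup $H:=G^{-1}=\langle f_1^{-1},\dots,f_k^{-1}\rangle$, and then to apply the structural results already established for $H$. Writing $g_j=f_j^{-1}$ for the generators of $H$, we have $z_{i_1,\dots,i_n}=g_{i_n}\circ\cdots\circ g_{i_1}(a)=Z_n(i_1,i_2,\dots)$, a forward orbit under $H$. Since the hypotheses $J_{ker}(G^{-1})=\emptyset$ and $J(G^{-1})\neq\emptyset$ are exactly the hypotheses of Claim~\ref{SumiClaim} and Lemma~\ref{Lset} for the semigroup $H$, applying Lemma~\ref{Lset} to $H$ identifies its unique minimal set as $L_H=J(H^{-1})=J(G)$. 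Thus the assertion $d(z_{i_1,\dots,i_n},J(G))\to 0$ is precisely the statement that $P_b$-a.e.\ forward orbit of $H$ converges to its minimal set $L_H$.

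First I would establish the weaker statement that $Z_n\to J(G)$ in probability, which is essentially immediate. Let $\psi(z)=d(z,J(G))$, a nonnegative Lipschitz function on $\CC$ vanishing exactly on $J(G)$. By the Markov property (Claim~\ref{Markov}), whose transition operator is $M_H^b=T_G^b$, one computes $\mathbb{E}_{P_b}[\psi(Z_n)]=(M_H^b)^n(\psi)(a)=\int \psi\, d\mu_n^{a,b}$. By Theorem~\ref{FullMethod} the measures $\mu_n^{a,b}$ converge weakly to $\mu^b$, whose support is $J(G)$, so $\int\psi\,d\mu_n^{a,b}\to\int\psi\,d\mu^b=0$; equivalently, using Claim~\ref{SumiClaim}(e) for $H$, $(M_H^b)^n(\psi)\to\int\psi\,d\nu^b=0$ \emph{uniformly} on $\CC$, since $\mathrm{supp}(\nu^b)=L_H=J(G)$. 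In particular $\mathbb{E}_{P_b}[\psi(Z_n)]\to 0$, so $\psi(Z_n)\to 0$ in probability and in $L^1(P_b)$.

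The main obstacle is upgrading this to almost sure convergence, and this is where the finer content of \cite[Theorem 3.15]{SumiRandom} enters. The key extra input is the \emph{uniformity} of the decay: setting $\epsilon_n:=\sup_{z\in\CC}(M_H^b)^n(\psi)(z)$, the Markov property gives the conditional bound $\mathbb{E}_{P_b}[\psi(Z_{m+n})\mid \mathcal{F}_m]=(M_H^b)^n(\psi)(Z_m)\le \epsilon_n$ for every $m$, regardless of where $Z_m$ lies. The delicate hyperbolic-metric contraction underlying Theorem 3.15 should furnish a summable (indeed exponential) rate $\epsilon_n\le C\lambda^n$ with $\lambda<1$ for Lipschitz $\psi$; granting this, $\sum_n P_b(\psi(Z_n)>\delta)\le \delta^{-1}\sum_n\epsilon_n<\infty$ for each $\delta>0$, and Borel--Cantelli forces $\psi(Z_n)\le\delta$ for all large $n$, $P_b$-a.s., whence $\psi(Z_n)\to 0$ along a sequence $\delta\downarrow 0$. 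I expect securing this summable rate to be the crux: bare uniform convergence $\epsilon_n\to 0$ does not by itself close a Borel--Cantelli argument, so absent an explicit rate one would instead invoke any direct almost-sure orbit-convergence conclusion contained in Theorem 3.15, or attempt to make $\psi(Z_n)$ a supermartingale by passing to the hyperbolic metric of $F(H)$ on the portion of the orbit near $L_H\cap F(H)$ (nonempty by Claim~\ref{SumiClaim}(c)), where Pick's Lemma renders each $g_j$ nonexpanding; the resulting a.s.\ limit is then pinned to $0$ by Fatou's Lemma together with $\mathbb{E}_{P_b}[\psi(Z_n)]\to 0$.
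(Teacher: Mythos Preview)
Your reduction is exactly the paper's: recognize the random backward orbit of $G$ as a random forward orbit of $H=G^{-1}$, apply Claim~\ref{SumiClaim} to $H$ to obtain its unique minimal set $L$, and use Lemma~\ref{Lset} to identify $L=J(H^{-1})=J(G)$. At that point the paper simply cites \cite[Theorem~3.15(7)]{SumiRandom}, which asserts directly that $P_b$-almost every forward orbit converges to the unique minimal set; this is precisely the ``direct almost-sure orbit-convergence conclusion contained in Theorem~3.15'' that you mention as a fallback. So the intended proof is your second paragraph plus one citation, and your fallback is the paper's actual argument.

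Your attempts to avoid that citation, however, do not close. The in-probability argument via $\mathbb{E}_{P_b}[\psi(Z_n)]=(M_H^b)^n\psi(a)\to 0$ is fine, but as you already note, uniform decay $\epsilon_n\to 0$ from Claim~\ref{SumiClaim}(e) gives no summability, and \cite[Theorem~3.15]{SumiRandom} does not hand you an exponential rate in this generality, so the Borel--Cantelli step is unsupported. The supermartingale sketch is more seriously flawed: the hyperbolic metric lives on $F(H)$, but the target set $L=J(G)$ is not contained in $F(H)$ (only $L\cap F(H)\neq\emptyset$ is guaranteed), and the spherical distance $\psi(z)=d(z,J(G))$ is not controlled by any hyperbolic distance near points of $J(G)\cap J(H)$; moreover the orbit $Z_n$ need not remain in $F(H)$, so Pick's lemma does not give $\psi(Z_{n+1})\le\psi(Z_n)$ along the orbit. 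In short, the clean route is the citation you already identified, and that is what the paper does.
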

\begin{proof}
By Claim~\ref{SumiClaim}, there exists a unique minimal set $L$ of $G^{-1}.$
By Lemma~\ref{Lset}, $L=J(G).$
By \cite[Theorem 3.15(7)]{SumiRandom}, the statement of our theorem holds.
\end{proof}

\begin{remark}
In Theorems~\ref{FullMethod} and \ref{main},
both conditions $J_{\ker}(G^{-1})=\emptyset $
and $J(G^{-1})\neq \emptyset$ are necessary.
We see this in the following.
\begin{itemize}
\item[(1)]
To see that the condition $J(G^{-1})\neq \emptyset $ is
necessary, let $k=1, f_{1}(z)=e^{2\pi i\theta }z$ where $\theta \in \Bbb{Q}$
is a constant, and $G=\langle f_{1}\rangle .$
Then $J(G^{-1})=\emptyset $ and $\mu ^{a}_{1,1,\ldots, 1}$
converges to a measure $\rho _{a}$ for each $a\in \CC $,
and $\rho _{a}\neq \rho _{a'}$ if $|a|\neq |a'|.$ Thus the condition
$J(G^{-1})\neq \emptyset $ is necessary.
\item[(2)]
To see that the condition $J_{\ker }(G^{-1})=\emptyset $
is necessary, let $k=2, f_{1}(z)=2z, f_{2}(z)=\frac{1}{2}z$,
and $G=\langle f_{1},f_{2}\rangle .$
Then $\emptyset \neq \{ 0,\infty \} \subset J_{\ker }(G^{-1})$,
and $\mu _{n}^{0,b}\rightarrow \delta _{0},
\mu _{n}^{\infty ,b}\rightarrow \delta _{\infty },
\mu ^{0}_{i_{1},\ldots, i_{n}}
\rightarrow \delta _{0}, $ and
$\mu ^{\infty }_{i_{1},\ldots ,i_{n}}\rightarrow \delta _{\infty }$
as $n\rightarrow \infty .$ Thus the condition $J_{\ker }(G^{-1})
=\emptyset $ is necessary.
\end{itemize}
Also, let $k\in \Bbb{N}$ be an arbitrary natural number and
suppose a M\"obius semigroup $G=\langle f_{1},\ldots, f_{k}\rangle
$ satisfies that
$\emptyset \neq E(G)\cap J(G)\subsetneqq J(G).$
Then we cannot have the conditions
$J_{\ker }(G^{-1})=\emptyset $ and
$J(G^{-1})\neq \emptyset $ simultaneously.
For, if we have both of them, then
Lemma~\ref{Lset} implies that
the unique minimal set $L$ of $G^{-1}$  is equal to
$J(G)$. Since $G^{-1}(E(G)\cap J(G))\subset E(G)\cap J(G)$,
we have a minimal set $L_{0}$ of
$G^{-1}$
in $E(G)\cap J(G).$ Thus $L_{0}=L=J(G).$ However,
this contradicts the assumption
$E(G)\cap J(G)\subsetneqq J(G).$
Thus we cannot have the conditions
$J_{\ker }(G^{-1})=\emptyset $ and
$J(G^{-1})\neq \emptyset $ simultaneously.
Also, the conclusion supp$\,\mu ^{b}=J(G)$ in Theorem~\ref{FullMethod}
cannot hold for any $a\in E(G)\cap J(G).$
To construct an example of M\"obius semigroup $G$
such that $\emptyset \neq E(G)\cap J(G)\subsetneqq J(G)$,
let $k=2, f_{1}(z)=2z$ and let $f_{2}$ be a M\"obius transformation
such that $0$ is an attracting fixed point of $f_{2}$ and $1$ is a
repelling fixed point of $f_{2}.$ Let $G=\langle f_{1},f_{2}\rangle .$
Then $\emptyset \neq \{ 0\} =E(G)\cap J(G)\subsetneqq J(G).$
\end{remark}
\begin{remark}
There are many examples of finitely generated
M\"obius semigroups $G$ such that
$J_{\ker }(G^{-1})=\emptyset $ and $J(G^{-1})\neq \emptyset .$
In fact, these conditions are checkable and it is easy for us to construct examples
of such M\"obius semigroups.  The class of such M\"obius semigroups is very
big. For various sufficient conditions for M\"obius semigroups $G$
to satisfy   $J_{\ker }(G^{-1})=\emptyset $ and $J(G^{-1})\neq \emptyset $
and examples of such M\"obius semigroups $G$, see
Section~\ref{Applications}.
\end{remark}

\section{Consequences of the main result Theorem~\ref{main}}

Using Theorem~\ref{main} together with the fact that the support of $\mu^b$ is $J(G)$ no matter what probability vector $b$ is chosen, one can quickly show the following results.
Under the assumptions of Theorem~\ref{main},
for each $a\in \CC $ and for each probability vector $b$, let
$\Sigma ^{a,b}:=\{ (i_{1},i_{2},\ldots )\in \Sigma _{k}^{+}\mid \mu _{i_{1},\ldots, i_{n}}^{a}\rightarrow \mu ^{b}
\mbox{ as }n\rightarrow \infty \}.$  Note that by Theorem~\ref{main}, we have $P_{b}(\Sigma ^{a,b})=1.$
Nearly identical details of the argument can be found in~\cite{HawkinsTaylor} and so we omit them here.

\begin{corollary}\label{Closure}
Under the assumptions of Theorem~\ref{main},
let $a \in \CC $ and $(i_1, i_2, \dots) \in \cup_b \Sigma^{a,b}$, where the union is taken over all probability vectors $b$.  Then
$$J(G) \subseteq \overline{\bigcup_{j=0}^\infty \{z_{i_1, i_2, \dots, i_j}\}}.$$
Furthermore, if $a \in J(G)$, then
$$J(G) = \overline{\bigcup_{j=0}^\infty \{z_{i_1, i_2, \dots, i_j}\}}.$$
\end{corollary}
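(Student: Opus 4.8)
The plan is to exploit the weak convergence guaranteed by membership in $\Sigma^{a,b}$ together with the identification of supports from Theorem~\ref{FullMethod}. Since $(i_1, i_2, \dots) \in \cup_b \Sigma^{a,b}$, there is some probability vector $b$ with $(i_1, i_2, \dots) \in \Sigma^{a,b}$, which by definition means $\mu_{i_1, \dots, i_n}^{a} \to \mu^{b}$ weakly in $\mathcal{P}(\CC)$ as $n \to \infty$. By Theorem~\ref{FullMethod} the support of $\mu^{b}$ equals $J(G)$, so the first inclusion amounts to locating $\mathrm{supp}\,\mu^{b}$ inside the closed set on the right.

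First I would record that the support of each approximating measure is exactly the partial backward orbit: from $\mu_{i_1, \dots, i_n}^{a} = \frac1n \sum_{j=1}^n \delta_{z_{i_1, \dots, i_j}}$ one reads off $\mathrm{supp}\,\mu_{i_1, \dots, i_n}^{a} = \{z_{i_1, \dots, i_j} : 1 \le j \le n\}$, so that $\bigcup_{n} \mathrm{supp}\,\mu_{i_1, \dots, i_n}^{a} = \bigcup_{j=1}^\infty \{z_{i_1, \dots, i_j}\} \subseteq \bigcup_{j=0}^\infty \{z_{i_1, \dots, i_j}\}$. Next I would invoke the standard fact that, on the compact metric space $\CC$, weak convergence $\mu_n \to \mu$ forces $\mathrm{supp}\,\mu \subseteq \overline{\bigcup_n \mathrm{supp}\,\mu_n}$: if $x \in \mathrm{supp}\,\mu$ and $U$ is an open neighborhood of $x$, then $\mu(U) > 0$, so by the portmanteau theorem $\liminf_n \mu_n(U) \ge \mu(U) > 0$, whence $U$ meets $\mathrm{supp}\,\mu_n$ for all large $n$ and $x$ lies in the asserted closure. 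Applying this with $\mu_n = \mu_{i_1, \dots, i_n}^{a}$ and $\mu = \mu^{b}$ yields $J(G) = \mathrm{supp}\,\mu^{b} \subseteq \overline{\bigcup_{j=0}^\infty \{z_{i_1, \dots, i_j}\}}$, the first inclusion.

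For the ``furthermore'' clause I would prove the reverse inclusion under the hypothesis $a \in J(G)$ using backward invariance of the Julia set. Each orbit point is $z_{i_1, \dots, i_j} = f^{-1}_{i_j} \circ \dots \circ f^{-1}_{i_1}(a)$, and since $J(G)$ is backward invariant under every element of $G$ (that is, $g^{-1}(J(G)) \subseteq J(G)$ for all $g \in G$, as quoted from~\cite{HM1}), a straightforward induction on $j$ gives $z_{i_1, \dots, i_j} \in J(G)$ for every $j$, with $z_0 = a \in J(G)$ as the base case. Hence $\bigcup_{j=0}^\infty \{z_{i_1, \dots, i_j}\} \subseteq J(G)$, and because $J(G)$ is closed its closure also lies in $J(G)$; combined with the first inclusion this forces equality.

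The argument is short, and the only genuinely nontrivial ingredient is the support-inclusion property of weakly convergent measures, so I expect the main (mild) obstacle to be stating that lemma cleanly and checking the elementary point that each empirical measure's support is precisely the finite partial orbit. Everything else is bookkeeping, since the substantive work — convergence to $\mu^{b}$ and the identification $\mathrm{supp}\,\mu^{b} = J(G)$ — is already supplied by Theorem~\ref{main} and Theorem~\ref{FullMethod}.
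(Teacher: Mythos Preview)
Your argument is correct and matches the approach the paper outlines: use weak convergence $\mu_{i_1,\dots,i_n}^a \to \mu^b$ together with $\mathrm{supp}\,\mu^b = J(G)$ (from Theorem~\ref{FullMethod}) and the portmanteau support-inclusion fact for the first part, then backward invariance and closedness of $J(G)$ for the reverse inclusion. The paper itself omits the proof, pointing instead to~\cite{HawkinsTaylor} for ``nearly identical details,'' and what you have written is exactly that standard argument.
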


\begin{remark}
Meeting the conclusion of this corollary
and Theorem~\ref{converge} is generally what one is looking for when saying that the ``random backward iteration" method works in drawing (an approximation of) $J(G)$.  Note that this conclusion holds for $P_b$ a.a. $(i_1, i_2, \dots) \in \Sigma_d^{+}$, regardless of the choice of $b$.
\end{remark}

\section{Applications}
\label{Applications}

In this section, we give some examples to which we can apply
Theorems~\ref{FullMethod}, \ref{main}, \ref{converge} and Corollary~\ref{Closure}.
For  M\"obius semigroups $S$, we give some sufficient conditions for $J_{ker }(S)$ to be empty.
\begin{lemma}\label{Lemma1}
Let $S$ be a M\"obius semigroup and $S^{-1}$ its inverse semigroup.  Assume $J(S^{-1}) \setminus J(S) \neq \emptyset$ and $E(S^{-1}) \cap J(S) = \emptyset$.  Then $J_{ker} (S) = \emptyset$.
\end{lemma}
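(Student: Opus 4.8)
The plan is to argue by contradiction: I would assume $J_{ker}(S) \neq \emptyset$ and derive that $J(S^{-1}) \subseteq J(S)$, which directly violates the hypothesis $J(S^{-1}) \setminus J(S) \neq \emptyset$. The engine of the argument will be Proposition~\ref{facts}(2), but applied not to $S$ itself but to its inverse semigroup $S^{-1}$. The two standing hypotheses are exactly what is needed to place a single point of $J_{ker}(S)$ into the non-exceptional regime for $S^{-1}$ where that proposition applies.

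First I would fix a point $z_0 \in J_{ker}(S)$. By Proposition~\ref{facts}(1), $J_{ker}(S)$ is a forward invariant subset of $J(S)$ under the action of $S$, and it is closed, being the intersection $\cap_{g\in S} g^{-1}(J(S))$ of closed sets. Consequently the forward orbit $S(z_0)$ remains inside $J_{ker}(S)$, and passing to closures yields $\overline{S(z_0)} \subseteq J_{ker}(S) \subseteq J(S)$.

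The next and crucial step is to translate between the two semigroups. The key bookkeeping is that the backward orbit of $z_0$ under $S^{-1}$, namely $\cup_{h \in S^{-1}} h^{-1}(\{z_0\})$, coincides with the forward orbit $S(z_0)$ under $S$, since $(g^{-1})^{-1} = g$ so that $(S^{-1})^{-1} = S$. Because $z_0 \in J(S)$ and, by hypothesis, $E(S^{-1}) \cap J(S) = \emptyset$, the point $z_0$ lies off the exceptional set $E(S^{-1})$. I can therefore apply Proposition~\ref{facts}(2) to the semigroup $S^{-1}$ at the point $z_0$, obtaining $J(S^{-1}) \subseteq \overline{S(z_0)}$. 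Combining this with the inclusion from the previous paragraph gives $J(S^{-1}) \subseteq \overline{S(z_0)} \subseteq J(S)$, i.e. $J(S^{-1}) \setminus J(S) = \emptyset$, contradicting the hypothesis; hence $J_{ker}(S) = \emptyset$.

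The main point requiring care (rather than a genuine obstacle) is the correct invocation of Proposition~\ref{facts}(2) for $S^{-1}$: one must verify the identity $(S^{-1})^{-1} = S$, confirm that the ``backward orbit under $S^{-1}$'' there is precisely the forward orbit under $S$, and check that the hypothesis $E(S^{-1}) \cap J(S) = \emptyset$ is exactly what guarantees $z_0 \notin E(S^{-1})$ so the proposition is available. Everything else is routine set manipulation; in particular, I expect no need to pass to a minimal set of $S$ or to control the cardinality of $J(S^{-1})$, since a single non-exceptional point already forces all of $J(S^{-1})$ into the closure of its orbit.
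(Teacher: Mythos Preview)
Your proof is correct and rests on the same engine as the paper's---Proposition~\ref{facts}(\ref{BackOrbitNonExcept}) applied to $S^{-1}$, together with the identification $(S^{-1})^{-1}(z)=S(z)$. The paper argues directly: it fixes a point $z_0\in J(S^{-1})\setminus J(S)$ and shows that $\overline{S(z)}$ meets $F(S)$ for \emph{every} $z\in\CC$, which forces a case split into $z\notin E(S^{-1})$ (handled via Proposition~\ref{facts}(\ref{BackOrbitNonExcept})) and $z\in E(S^{-1})$ (handled via Proposition~\ref{facts}(\ref{ExcSetCompInv})). Your contrapositive packaging is a bit more economical: by starting from a hypothetical $z_0\in J_{ker}(S)\subseteq J(S)$, the hypothesis $E(S^{-1})\cap J(S)=\emptyset$ immediately places $z_0$ outside $E(S^{-1})$, so no case split and no appeal to Proposition~\ref{facts}(\ref{ExcSetCompInv}) are needed. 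Either way the substance is identical.
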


\begin{proof}
Choose $z_0 \in J(S^{-1}) \setminus J(S)$.

Then for all $z \in \CC \setminus E(S^{-1})$ we see $\overline{S(z)} = \overline{(S^{-1})^{-1}(z)}$ must contain $z_0$ by Proposition~\ref{facts}\eqref{BackOrbitNonExcept}.  Therefore for all $z \in \CC \setminus E(S^{-1})$, we have $\overline{S(z)}$ meets $F(S)$ since $z_0 \in \CC \setminus J(S)= F(S)$.

For $z \in E(S^{-1})$, since $S(E(S^{-1})) = E(S^{-1})$ by Proposition~\ref{facts}\eqref{ExcSetCompInv} we see that the assumption $E(S^{-1}) \cap J(S) = \emptyset$ gives that $S(z) \subseteq E(S^{-1}) \subset F(S)$.

So, for all $z \in \CC$ we have $\overline{S(z)}$ meets $F(S)$.  Thus $J_{ker} (S) = \emptyset$ since $J_{ker} (S)$ is a forward invariant subset of $J(S)$.
\end{proof}

\begin{lemma}
Let $S$ be a M\"obius semigroup and suppose that $J(S^{-1})\neq \CC $, $E(S^{-1})\cap J(S)=\emptyset$,
and int$(J(S))\neq \emptyset $, where int denotes the set of interior points with respect to the topology in
$\CC .$ Then $J_{\ker }(S)=\emptyset .$
\end{lemma}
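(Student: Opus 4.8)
The plan is to verify the hypothesis of Lemma~\ref{Lemma1}. Since $E(S^{-1})\cap J(S)=\emptyset$ is already assumed, it suffices to produce a point of $J(S^{-1})$ lying in $F(S)$, i.e.\ to show $J(S^{-1})\setminus J(S)\neq\emptyset$; Lemma~\ref{Lemma1} then delivers $J_{\ker}(S)=\emptyset$. Equivalently I would run the contradiction directly: assume $J_{\ker}(S)\neq\emptyset$ and aim to contradict it.

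First I would extract the dynamical content of $\mathrm{int}\,J(S)\neq\emptyset$. Write $U=\mathrm{int}\,J(S)$. Because $J(S)$ is backward invariant, for each $g\in S$ the set $g^{-1}(U)$ is open and contained in $J(S)$, hence $g^{-1}(U)\subseteq U$; thus every element of $S^{-1}$ maps $U$ into itself. After ruling out $J(S)=\CC$ (so that $\CC\setminus U$ has at least three points), Montel's theorem shows $S^{-1}$ is normal on $U$, whence $U\subseteq F(S^{-1})$ and $J(S^{-1})\cap U=\emptyset$. The same self-mapping property, combined with Proposition~\ref{facts}\eqref{BackOrbitNonExcept} applied to a point $q\in U\setminus E(S)$ (whose backward $S$-orbit lies in $U$ yet is dense in $J(S)$), yields the regularity statement $\overline{U}=J(S)$; the same density forces $J(S^{-1})\neq\emptyset$, since a normal $S^{-1}$-orbit cannot be dense in the two-dimensional set $U$ unless $S^{-1}$ is precompact, which would make $J(S)=\emptyset$. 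Together these facts pin the location of the target: $J(S^{-1})\subseteq F(S)\cup\partial J(S)$.

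Now the contradiction. Picking $z\in J_{\ker}(S)$, forward invariance gives $\overline{S(z)}\subseteq J_{\ker}(S)\subseteq J(S)$; and since $z\in J(S)$ is non-exceptional for $S^{-1}$, Proposition~\ref{facts}\eqref{BackOrbitNonExcept} applied to $S^{-1}$ gives $J(S^{-1})\subseteq\overline{(S^{-1})^{-1}(z)}=\overline{S(z)}\subseteq J(S)$. Combined with the location statement of the previous paragraph, this forces $J(S^{-1})\subseteq\partial J(S)$. A single point of $J(S^{-1})$ in $F(S)$ would immediately contradict this and finish the proof, so the entire argument reduces to excluding the possibility $J(S^{-1})\subseteq\partial J(S)$.

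That exclusion is exactly where I expect the main difficulty to lie. The Montel step has a useful by-product: the argument of the previous paragraph applied to $S^{-1}$ in place of $S$ (using $\overline{S^{-1}(q)}\supseteq J(S)$ and $J(S)\not\subseteq J(S^{-1})$, which follows from $U\subseteq F(S^{-1})$) shows $J_{\ker}(S^{-1})=\emptyset$. Hence Claim~\ref{SumiClaim} and Lemma~\ref{Lset} apply to $S^{-1}$, identifying its unique minimal set as $J((S^{-1})^{-1})=J(S)=\overline{U}$ and guaranteeing $J(S)\cap F(S^{-1})\neq\emptyset$. To rule out $J(S^{-1})\subseteq\partial J(S)$ I would then imitate the hyperbolic-metric argument of Lemma~\ref{Lset}, case (ii): work with the forward-$S^{-1}$-invariant domain $U\subseteq F(S^{-1})$, on which Pick's Lemma makes each element of $S^{-1}$ a contraction, and show that a purely boundary location of $J(S^{-1})=\CC\setminus F(S^{-1})$ is incompatible with the minimal set of $S^{-1}$ filling all of $\overline{U}=J(S)$ while meeting $F(S^{-1})$. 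Making this incompatibility precise — reconciling the density of $S^{-1}$-orbits throughout $\overline{U}$ with the Fatou/Julia dichotomy near $\partial U$ — is the step I anticipate will require the most care.
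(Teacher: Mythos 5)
Your reduction is set up in the right direction, but the proposal is not a proof: the single step that carries all the difficulty is the one you explicitly defer. Granting your Montel step, you correctly conclude $J(S^{-1})\cap \mathrm{int}(J(S))=\emptyset$, so that a failure of the lemma forces $J(S^{-1})\subseteq\partial J(S)$; but that boundary configuration is never excluded --- your last paragraph describes an intention (``the step I anticipate will require the most care'') rather than an argument. Moreover, the tools you propose for that step are not available under the stated hypotheses. To get $J_{\ker}(S^{-1})=\emptyset$ you apply Proposition~\ref{facts}(2) to the semigroup $S$ at a point $z\in J_{\ker}(S^{-1})\subseteq J(S^{-1})$, which requires $z\notin E(S)$, i.e.\ the condition $E(S)\cap J(S^{-1})=\emptyset$; the lemma assumes only the mirror condition $E(S^{-1})\cap J(S)=\emptyset$, and nothing prevents $J_{\ker}(S^{-1})$ from consisting exactly of points of $E(S)$ (indeed, if $J_{\ker}(S^{-1})$ is finite it automatically lies in $E(S)$, being forward invariant under $S^{-1}$). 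So Claim~\ref{SumiClaim} and Lemma~\ref{Lset} cannot be invoked for $S^{-1}$ as you propose. A second unjustified step occurs earlier: ``after ruling out $J(S)=\CC$'' is doing real work, since if $J(S)=\CC$ then $J_{\ker}(S)=\CC$ and the conclusion fails, so the hypotheses must be shown to exclude this case; you give no such argument, and your Montel application (which needs three omitted points, i.e.\ $\CC\setminus \mathrm{int}(J(S))$ infinite) collapses precisely there.

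For comparison, the paper's proof never engages with the boundary case at all: it assumes the inclusion $J(S)\subseteq J(S^{-1})$, notes that then $\emptyset\neq U:=\mathrm{int}(J(S))\subseteq J(S^{-1})$ while $S^{-1}$ maps $U$ into $U\subseteq J(S)$ and omits the nonempty open set $F(S^{-1})\subseteq\CC\setminus J(S)$ (this is where $J(S^{-1})\neq\CC$ enters, and it also disposes of $J(S)=\CC$ inside the contradiction), so Montel gives $U\subseteq F(S^{-1})$, contradicting $U\subseteq J(S^{-1})$; it then invokes Lemma~\ref{Lemma1}. Note, however, that negating $J(S)\subseteq J(S^{-1})$ yields $J(S)\setminus J(S^{-1})\neq\emptyset$, whereas Lemma~\ref{Lemma1} requires $J(S^{-1})\setminus J(S)\neq\emptyset$; these are different statements, and bridging them is exactly the $J(S^{-1})\subseteq\partial J(S)$ problem you ran into. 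So your diagnosis of where the real difficulty sits is sound and goes beyond what the paper's short argument addresses --- but a correct diagnosis is not a proof, and as written your proposal establishes neither the lemma nor the key exclusion it reduces to.
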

\begin{proof}
Suppose $J(S^{-1})\supset J(S).$
Since $S^{-1}(J(S))\subset J(S)$, we have
$S^{-1}(\mbox{int}(J(S)))\subset \mbox{int}(J(S)).$ Since $J(S^{-1})\neq \CC $,
Montel's theorem implies that $\mbox{int}(J(S))\subset F(S^{-1}).$ However,
this is a contradiction. Thus we have $J(S^{-1})\setminus J(S)\neq \emptyset .$
By Lemma~\ref{Lemma1}, we obtain that $J_{\ker }(S)=\emptyset .$
\end{proof}

\begin{lemma}
Let $S$ be a M\"obius semigroup and $S^{-1}$ its inverse semigroup.  Assume $\#J(S) \ge 3$ and $J(S^{-1}) \cap J(S) = \emptyset$.  Then $J_{ker} (S) = \emptyset$.
\end{lemma}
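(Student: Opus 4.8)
The plan is to reduce the statement to Lemma~\ref{Lemma1}, whose conclusion is exactly $J_{ker}(S)=\emptyset$, by verifying its two hypotheses, namely $J(S^{-1})\setminus J(S)\neq\emptyset$ and $E(S^{-1})\cap J(S)=\emptyset$, from the present assumptions $\#J(S)\geq 3$ and $J(S^{-1})\cap J(S)=\emptyset$.

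First I would check $J(S^{-1})\setminus J(S)\neq\emptyset$. Since $\#J(S)\geq 3$, Proposition~\ref{facts}\eqref{RepelDense} guarantees a loxodromic element $g\in S$, whose attracting fixed point is a repelling fixed point of $g^{-1}\in S^{-1}$ and hence lies in $J(S^{-1})$; thus $J(S^{-1})\neq\emptyset$. The disjointness assumption then forces $J(S^{-1})\subseteq\CC\setminus J(S)=F(S)$, so that $J(S^{-1})\setminus J(S)=J(S^{-1})\neq\emptyset$.

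The harder hypothesis, and the main obstacle, is $E(S^{-1})\cap J(S)=\emptyset$. I would argue by contradiction: suppose $z\in E(S^{-1})\cap J(S)$. Unwinding the definitions, $E(S^{-1})$ is exactly the set of points with finite forward orbit $S(z)=\bigcup_{g\in S}g(z)$, because $(S^{-1})^{-1}=S$; hence $K:=S(z)\cup\{z\}$ is a finite set. A short computation shows $g(K)\subseteq K$ for every $g\in S$, and since each $g$ is an injective M\"obius map and $K$ is finite, each $g$ in fact permutes $K$, so $K$ is completely invariant under $S$. The contradiction is then extracted by a case analysis on $\#K$. If $\#K\geq 3$, then on $\CC\setminus K$ every element of $S$ omits the three or more values of $K$, so Montel's theorem makes $S$ normal there, forcing $J(S)\subseteq K$, which contradicts that $J(S)$ is uncountable by Proposition~\ref{facts}\eqref{RepelDense}. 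If $\#K=2$, I would conjugate so that $K=\{0,\infty\}$, whence each generator has the form $w\mapsto aw$ or $w\mapsto a/w$; a direct Marty-type estimate then shows the spherical derivatives of all elements of $S$ are uniformly bounded on compact subsets of $\CC\setminus\{0,\infty\}$, again giving $J(S)\subseteq K$ and the same contradiction.

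The remaining case $\#K=1$ is where both hypotheses are used together, and it is the delicate point of the argument. Here $z$ is a common fixed point of all generators; conjugating $z$ to $\infty$, every generator is affine, $f_j(w)=a_jw+b_j$, and every $g\in S$ has the form $w\mapsto a_gw+b_g$. Since $\infty=z\in J(S)$, the disjointness assumption gives $\infty\notin J(S^{-1})$, i.e. $\infty\in F(S^{-1})$; normality of $S^{-1}$ near $\infty$, computed by Marty's criterion in the coordinate $u=1/w$ where the multiplier of $g^{-1}$ at $\infty$ is $a_g$, yields $\sup_{g\in S}|a_g|<\infty$. This bound makes the spherical derivatives $g^{\#}$ uniformly bounded on each disk $\{|w|\leq R\}$, so $S$ is normal on $\C$ and $J(S)\subseteq\{\infty\}$, once more contradicting $\#J(S)\geq 3$. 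With all cases excluded we obtain $E(S^{-1})\cap J(S)=\emptyset$, and Lemma~\ref{Lemma1} then gives $J_{ker}(S)=\emptyset$.
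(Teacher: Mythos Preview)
Your reduction to Lemma~\ref{Lemma1} is correct, and the case analysis verifying $E(S^{-1})\cap J(S)=\emptyset$ is sound (the Marty estimates in the $\#K=2$ and $\#K=1$ cases go through as you describe). However, the paper takes a much shorter direct route that bypasses Lemma~\ref{Lemma1} and any discussion of $E(S^{-1})$ entirely: assume $w\in J_{ker}(S)$, use Proposition~\ref{facts}\eqref{RepelDense} to pick a loxodromic $g\in S$ whose repelling fixed point $p$ is distinct from $w$, and note that its attracting fixed point $q$ lies in $J(S^{-1})$, hence in $F(S)$ by the disjointness hypothesis. Since $w\neq p$, the iterates $g^{n}(w)$ converge to $q\in F(S)$, contradicting that $J_{ker}(S)$ is a closed forward-invariant subset of $J(S)$.

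The comparison: your approach earns the conclusion by establishing the stronger intermediate fact $E(S^{-1})\cap J(S)=\emptyset$, at the cost of a three-case classification and explicit spherical-derivative bounds. The paper's argument sidesteps this by exploiting a single loxodromic element to push any putative $w\in J_{ker}(S)$ out of $J(S)$ dynamically; it uses only Proposition~\ref{facts}\eqref{RepelDense} and the forward invariance of $J_{ker}(S)$, and is essentially two lines long.
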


\begin{proof}
Suppose $w \in J_{ker} (S)$.  Using Proposition~\ref{facts}\eqref{RepelDense} we may choose $g \in S$ such that $g$ has a repelling fixed point $p \neq w$.  So $g$ has an attracting fixed point $q$ which must then be in $J(S^{-1})$.  Hence $q \notin J(S)$.  Since $g^n(w) \to q$ we contradict the fact that $J_{ker} (S)$ is both a closed forward invariant set under $S$ and contained in $J(S)$.
\end{proof}

We apply the above results to the Caruso semigroups $S_\beta$ defined as follows.
For a fixed nonzero complex number $\beta$ we let $f(z)=\beta + 1/z$ and
$g(z)=-\beta + 1/z.$ We define the M\"obius semigroup $S_\beta=\langle f,g \rangle$, and its inverse semigroup
$S_\beta'=\langle f^{-1}, g^{-1} \rangle.$  See~\cite{FMS} for background, but note that there the notation is different in that M\"obius \textit{groups} are denoted by $\langle \dots \rangle$ and \textit{semigroups} are denoted $\prec \dots \succ$.  We note that in Section~7 of~\cite{FMS} it is shown that neither $S_\beta$ nor $S_\beta'$ have finite orbits in $H^3 \cup \CC$ and so, in particular, $E(S_\beta') = \emptyset$ and $E(S_\beta) = \emptyset$.  Further it is shown that $\#J(S_\beta) \ge 3$ and $\#J(S_\beta') \ge 3$.

\begin{theorem}\label{Caruso}
For each $\beta \in \C \setminus \{0\}$ we have either

(i) $J(S_\beta) \neq J(S_\beta')$ and $J_{ker}(S_\beta) = \emptyset = J_{ker}(S_\beta')$

or

(ii) $J(S_\beta) = J(S_\beta')$ and $J_{ker}(S_\beta) = J(S_\beta) = J(S_\beta') = J_{ker}(S_\beta').$

\end{theorem}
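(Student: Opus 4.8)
The plan is to exploit a symmetry between $S_\beta$ and $S_\beta'$ coming from the inversion $h(z)=1/z$, and then to split on whether the two Julia sets coincide. First I would record this symmetry. A direct computation gives $h\circ f\circ h = g^{-1}$ and $h\circ g\circ h = f^{-1}$ (noting $h=h^{-1}$), so conjugation by $h$ carries the generating set $\{f,g\}$ of $S_\beta$ onto the generating set $\{g^{-1},f^{-1}\}$ of $S_\beta'$; hence $hS_\beta h^{-1}=S_\beta'$. Since conjugation by a M\"obius map preserves normality, it carries Fatou and Julia sets over, giving $J(S_\beta')=h(J(S_\beta))$. I would also recall from the setup that $E(S_\beta)=E(S_\beta')=\emptyset$ and $\#J(S_\beta),\#J(S_\beta')\ge 3$.

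For Case (i), assume $J(S_\beta)\neq J(S_\beta')$. The point I would stress is that the relation $J(S_\beta')=h(J(S_\beta))$, together with $h$ being an involutive bijection, makes the two set differences symmetric: $h\bigl(J(S_\beta')\setminus J(S_\beta)\bigr)=J(S_\beta)\setminus J(S_\beta')$. Thus $J(S_\beta')\setminus J(S_\beta)\neq\emptyset$ if and only if $J(S_\beta)\setminus J(S_\beta')\neq\emptyset$, and since the two Julia sets differ at least one of these is nonempty, hence both are. Now Lemma~\ref{Lemma1} applies to $S=S_\beta$ (whose inverse semigroup is $S_\beta'$): from $J(S_\beta')\setminus J(S_\beta)\neq\emptyset$ and $E(S_\beta')\cap J(S_\beta)=\emptyset$ we get $J_{ker}(S_\beta)=\emptyset$; applying the same lemma to $S=S_\beta'$ gives $J_{ker}(S_\beta')=\emptyset$.

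For Case (ii), assume $J(S_\beta)=J(S_\beta')$. Here I would convert the equality into forward invariance. Since $J(S_\beta')$ is backward invariant under $S_\beta'$, we have $\phi^{-1}(J(S_\beta'))\subseteq J(S_\beta')$ for every $\phi\in S_\beta'$; rewriting $J(S_\beta')=J(S_\beta)$ and letting $g=\phi^{-1}$ range over $S_\beta=(S_\beta')^{-1}$ shows $g(J(S_\beta))\subseteq J(S_\beta)$ for all $g\in S_\beta$, i.e. $J(S_\beta)$ is forward invariant under $S_\beta$. By Proposition~\ref{facts}(1), $J_{ker}(S_\beta)$ is the largest forward invariant subset of $J(S_\beta)$, which forces $J_{ker}(S_\beta)=J(S_\beta)$. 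The symmetric argument, swapping the roles of $S_\beta$ and $S_\beta'$, gives $J_{ker}(S_\beta')=J(S_\beta')$, and combining with the case hypothesis yields the full chain of equalities.

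The main obstacle, and really the one genuine idea, is spotting the conjugacy $h(z)=1/z$: without it, the hypothesis $J(S_\beta)\neq J(S_\beta')$ only guarantees that one of the set differences is nonempty, which suffices to kill one kernel Julia set via Lemma~\ref{Lemma1} but not the other. The symmetry is precisely what upgrades this one-sided statement into the two-sided one needed for the symmetric conclusion in Case (i); everything else is bookkeeping with the invariance properties already recorded in Proposition~\ref{facts} and in the facts quoted from~\cite{HM1}.
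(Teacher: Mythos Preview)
Your proof is correct and follows the same overall architecture as the paper's: apply Lemma~\ref{Lemma1} in Case~(i) once both set differences are shown to be nonempty, and use forward invariance plus Proposition~\ref{facts}(1) in Case~(ii). The one substantive difference is how you obtain the two-sided nonemptiness of the set differences in Case~(i). The paper simply cites Proposition~7.14 of~\cite{FMS} for this fact, whereas you derive it directly from the explicit conjugacy $h(z)=1/z$ satisfying $hS_\beta h^{-1}=S_\beta'$, hence $h(J(S_\beta))=J(S_\beta')$ and $h(J(S_\beta')\setminus J(S_\beta))=J(S_\beta)\setminus J(S_\beta')$. Your computation $h\circ f\circ h=g^{-1}$, $h\circ g\circ h=f^{-1}$ is correct, and the argument is clean. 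The upshot is that your version is self-contained within the paper (modulo the quoted facts $E(S_\beta')=\emptyset$ and $\#J(S_\beta)\ge 3$, which both proofs take from~\cite{FMS}), while the paper's version outsources this symmetry step to~\cite{FMS}; very likely Proposition~7.14 there is proved by exactly the observation you made.
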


\begin{proof}
From Proposition 7.14 of~\cite{FMS}, $J(S_\beta) \neq J(S_\beta')$ implies both $J(S_\beta) \setminus J(S_\beta') \neq \emptyset$ and $J(S_\beta') \setminus J(S_\beta) \neq \emptyset$.  Hence by Lemma~\ref{Lemma1} the result follows (noting that $J(S_\beta) = J(S_\beta')$ implies $J(S_\beta)$ is both forward and backward invariant under $S_\beta$ and thus $J_{ker}(S_\beta) = J(S_\beta) = J(S_\beta') = J_{ker}(S_\beta')$).
\end{proof}

Hence whenever $J(S_\beta) \neq J(S_\beta')$ we see by Theorem~\ref{main} (noting $\#J(S_\beta) \ge 3$ and $\#J(S_\beta') \ge 3$) that both the random backward iteration algorithm and the full backward iteration algorithm work for both $S_\beta$ and $S_\beta'$, and these method can start with any seed value $a \in \CC$.  This is an extension from what was shown in~\cite{FMS}, and which we now describe.

Let $K \subset \CC$ be a compact set that is forward invariant under each map in rational semigroup $S$.
Further suppose
$K$ is equipped with a metric $d,$ consistent with the topology on $K$. If there
exists a constant $c,$ $0<c<1,$ such that for each $s \in S$ we have
$d(s(z),s(w))\leq cd(z,w)$ for all $z, w \in K,$ then we say that $S$ is a
CIFS (contracting iterated function system) on $(K,d)$.  We define the \textit{attractor} $\mathcal{A}=\mathcal{A}(K,S)$
as the closure of the set of all fixed points in $K$ of the maps $s \in S$.  If $\textrm{Int}(K) \neq \emptyset$ and $\mathcal{A} \subset \textrm{Int}(K)$, then $\mathcal{A}$ is called
\textit{thick}.

Theorem 5.7 in~\cite{FMS} shows that for any finitely generated M\"obius semigroup $S$ the following (i)-(iii) are equivalent: (i) $S$ has a thick attractor $J(S^{-1})$, (ii) $S^{-1}$ has a thick attractor $J(S)$, and (iii) $J(S^{-1}) \cap J(S)=\emptyset$ and the generators of $S$ are loxodromic (having a repelling fixed point).  Noted in~\cite{FMS}, when a thick attractor exists both the (forward) random walk method and the (forward) full method work for drawing the corresponding Julia sets, however, the initial seed value must be chosen from the set $K$ given in the definition of thick attractor.

\begin{remark}\label{Delicate}
Theorem~\ref{main} presented here is not so restrictive as Theorem 5.7 in~\cite{FMS} on the conditions that $J(S^{-1}) \cap J(S)=\emptyset$ nor on the choice of starting seed value.  This, however, required a more delicate analysis as conducted in the proof of Theorem 3.15 in~\cite{SumiRandom} which was crucial in the proof of Theorem~\ref{main}.
\end{remark}

Concrete examples where Theorem~\ref{main} applies, but the result of~\cite{FMS} does not, are for $\beta = \pm 1 \pm i$ where (see Theorem 8.2 in~\cite{FMS}), $J(S_\beta)$ and $J(S_\beta')$ are unequal Cantor sets with $J(S_\beta) \cap J(S_\beta')=\{ \pm 1, \pm i\}.$  Other examples of this type are for $\beta = \pm 2$ or $\beta = \pm 2i$, where (see Theorem 8.1 in~\cite{FMS}) $J(S_\beta)$ and $J(S_\beta')$ are unequal Cantor sets with $J(S_\beta) \cap J(S_\beta')=\{ \beta/2, -\beta/2\}.$

We give further examples of M\"obius semigroups $G$ with $J(G)\cap J(G^{-1})\neq \emptyset $
to which we can apply Theorems~\ref{FullMethod}, \ref{main}, \ref{converge} and Corollary~\ref{Closure}.
\begin{example}
Denoting the open unit disk by $\mathbb{D}$ with boundary $S=\partial \mathbb{D}$, let $f_1, \dots, f_k$ be M\"obius maps with $f_j(\mathbb{D})=\mathbb{D}$ for each $j= 1, \dots, k$ chosen such that for $G=\langle f_1, \dots, f_k \rangle$ we have $J(G) \cap J(G^{-1}) \neq \emptyset$.  (For example, any parabolic fixed points of any $g \in G$ would necessarily lie in $J(G) \cap J(G^{-1})$.
Also, if $G=\langle f_{1},\ldots, f_{k}\rangle $ is a Fuchsian group with $J(G)\neq \emptyset$,
then $J(G)\cap J(G^{-1})=J(G)\neq \emptyset .$)  Note that $J(G)\subseteq S$ and $J(G^{-1})\subseteq S$.  Set $f_{k+1}(z)=az$ with $|a|>1$ and call $\tilde{G}=\langle f_1, \dots, f_k, f_{k+1} \rangle$.  Since $\CC \setminus \overline{\mathbb{D}}$ is forward invariant under $\tilde{G}$ we must have $J(\tilde{G}) \subseteq \overline{\mathbb{D}}$.  Similarly, $J(\tilde{G}^{-1}) \subseteq \CC \setminus \mathbb{D}$.  In this way since $J(G) \subset J(\tilde{G})$ and $J(G^{-1}) \subset J(\tilde{G}^{-1})$, we have that $J(\tilde{G}) \cap J(\tilde{G}^{-1})$ contains $J(G) \cap J(G^{-1})$, which could be constructed to be a rather large subset of $S$.  However,
Theorems~\ref{FullMethod}, \ref{main}, \ref{converge} and Corollary~\ref{Closure} still apply.
\end{example}

\vskip.3in
We conclude this paper with the following:

\begin{question}
Is $\{\beta \in \C \setminus \{0\}: J(S_\beta) \neq J(S_\beta')\}$ open and dense in $\C \setminus \{0\}$?
\end{question}

\vskip.3in
Acknowledgement.  This work was partially supported by a grant from the Simons Foundation (\#318239 to Rich Stankewitz). The research of the second author was partially supported by
JSPS KAKENHI 24540211, 15K04899.

\appendix

\section{Proof of Theorem~\ref{main}}\label{AppProof}

In this section we introduce a key result of Furstenberg and Kifer to relate the invariance of the measure $\mu^b$ to the convergence of Cesaro averages of $\phi(z_n)$ where $\{z_n\}$ is a random backward orbit and $\phi \in C(\CC)$.
As in~\cite{HawkinsTaylor, RandomBackStankewitzSumi}, this result is the key tool in the proof of our main theorem.

Let $Y$ be a compact metric space and let $\mathcal{P}(Y)$ be the space of Borel probability measures on $Y$, noting that $\mathcal{P}(Y)$ is a compact metric space in the topology of weak* convergence.  Suppose there exists a continuous map $Y \to \mathcal{P}(Y)$ assigning to each $x \in Y$ a measure $\mu_x$.  The corresponding Markov operator $H:C(Y) \to C(Y)$ is given by
$$Hf(x)= \int f(y) \, d\mu_x(y).$$

Suppose the stochastic process $\{X_n:n=0, 1, 2, \dots\}$ is a Markov process with state space $Y$ corresponding to $H$, i.e.,  $$P(\{X_{n+1} \in A|X_0, X_1, \dots, X_n\}) = \mu_{X_n}(A).$$
Given these assumptions we then have the following, which is a weaker version (but sufficient for our purposes) of Theorem 1.4 in~\cite{FurstKifer}.

\begin{theorem}[\cite{FurstKifer}]\label{FKtheorem}
Assume that there is a unique probability measure $\nu$ on $Y$ that is invariant under the adjoint operator $H^*$ on $\mathcal{P}(Y)$ and let $\phi \in C(Y)$.  Then with probability one
$$\frac1{N+1} \sum_{n=0}^N \phi(X_n) \to \int \phi \, d\nu$$
as $N \to \infty$.
\end{theorem}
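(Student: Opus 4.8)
The plan is to upgrade the fixed-$\phi$ statement to almost-sure weak* convergence of the full empirical measures, and to identify every subsequential limit as the unique invariant measure $\nu$. For a sample path of the chain I define the random empirical measures $\Lambda_N = \frac{1}{N+1}\sum_{n=0}^N \delta_{X_n} \in \mathcal{P}(Y)$, so that the quantity to control is exactly $\langle \phi, \Lambda_N\rangle$. Since $\mathcal{P}(Y)$ is a compact metric space, almost every trajectory produces a sequence $\{\Lambda_N\}$ possessing weak* limit points, and it suffices to prove that, almost surely, every such limit point equals $\nu$; weak* convergence $\Lambda_N \to \nu$ then follows, and evaluating against the given $\phi \in C(Y)$ yields the claim.

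First I would run a martingale argument to show that limit points are $H^*$-invariant. Fix $\psi \in C(Y)$ and let $\mathcal{F}_n = \sigma(X_0, \dots, X_n)$. The Markov property with transition kernel $x \mapsto \mu_x$ gives $E[\psi(X_n)\mid \mathcal{F}_{n-1}] = (H\psi)(X_{n-1})$, so the increments $D_n = \psi(X_n) - (H\psi)(X_{n-1})$ form a martingale difference sequence. Because $Y$ is compact, $\psi$ and $H\psi$ are bounded, so the $D_n$ are uniformly bounded; the $L^2$ martingale convergence theorem applied to $\sum_n D_n/n$ together with Kronecker's lemma then gives $\frac1N \sum_{n=1}^N D_n \to 0$ almost surely. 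Rewriting in terms of the empirical measures, and absorbing the negligible boundary terms of order $1/N$, this reads $\langle \psi, \Lambda_N\rangle - \langle H\psi, \Lambda_N\rangle \to 0$ almost surely.

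Next I would make this simultaneous in $\psi$. Since $Y$ is a compact metric space, $C(Y)$ is separable; choosing a countable dense set $\{\psi_m\}$ and intersecting the corresponding full-measure events produces a single event of probability one on which $\langle \psi_m, \Lambda_N - H^*\Lambda_N\rangle \to 0$ for every $m$. As $\Lambda_N - H^*\Lambda_N$ is a difference of probability measures and so has total variation at most $2$, a density approximation upgrades this to $\langle \psi, \Lambda_N\rangle - \langle H\psi, \Lambda_N\rangle \to 0$ for all $\psi \in C(Y)$ at once. On this event, if $\Lambda$ is any weak* limit point of $\{\Lambda_N\}$ along a subsequence, then using $\langle H\psi, \rho\rangle = \langle \psi, H^*\rho\rangle$ with $H\psi \in C(Y)$ (here the assumed continuity of $x \mapsto \mu_x$ is precisely what guarantees $H\psi \in C(Y)$, hence the weak* continuity needed to pass to the limit) I obtain $\langle \psi, \Lambda\rangle = \langle \psi, H^*\Lambda\rangle$ for every $\psi$, so $H^*\Lambda = \Lambda$. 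By the uniqueness hypothesis $\Lambda = \nu$, and since every limit point equals $\nu$ I conclude $\Lambda_N \to \nu$ weak* almost surely, whence $\frac{1}{N+1}\sum_{n=0}^N \phi(X_n) = \langle \phi, \Lambda_N\rangle \to \int \phi \, d\nu$.

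I expect the main obstacle to be the passage from the per-$\psi$ almost-sure statement to a single almost-sure event valid for all test functions simultaneously: the martingale step only identifies one linear constraint on $\Lambda_N$ for each fixed $\psi$, which is far from pinning down the limit, so the characterization of limit points as $H^*$-invariant genuinely requires the constraint to hold for all $\psi$ on a common full-measure set. Separability of $C(Y)$ together with the uniform total-variation bound on $\Lambda_N - H^*\Lambda_N$ is the mechanism that overcomes this; the remaining ingredients, namely the bounded-increment martingale law of large numbers and the weak* continuity of $H^*$, are standard.
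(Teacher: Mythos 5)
Your proof is correct, but there is nothing in the paper to compare it against: the paper states this theorem as an imported result (a weakened form of Theorem 1.4 of Furstenberg--Kifer) and gives no proof, using it only as a black box in the proof of Theorem~\ref{main}. What you have done is reconstruct, correctly, the standard argument behind that citation: the martingale-difference decomposition $D_n = \psi(X_n) - (H\psi)(X_{n-1})$ with the $L^2$ convergence theorem and Kronecker's lemma to get $\langle \psi, \Lambda_N\rangle - \langle H\psi, \Lambda_N\rangle \to 0$ a.s., separability of $C(Y)$ to obtain a single full-measure event, compactness of $\mathcal{P}(Y)$ to extract weak* limit points, the Feller property (which you rightly identify as the role of the assumed continuity of $x \mapsto \mu_x$, ensuring $H\psi \in C(Y)$) to show every limit point is $H^*$-invariant, and uniqueness of the invariant measure to conclude. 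All steps check out, including the bookkeeping of the $O(1/N)$ boundary terms and the total-variation bound $\|\Lambda_N - H^*\Lambda_N\|_{TV} \le 2$ used in the density upgrade. One small streamlining is available: the upgrade to all $\psi$ simultaneously before passing to the limit is not strictly needed, since a weak* limit point $\Lambda$ satisfying $\langle \psi_m, \Lambda\rangle = \langle \psi_m, H^*\Lambda\rangle$ for a countable dense family $\{\psi_m\}$ already forces $H^*\Lambda = \Lambda$ by density and the Riesz representation theorem; but your version is equally valid. Note also that your argument proves the stronger statement (a single null set working for all $\phi \in C(Y)$ at once, i.e., a.s. weak* convergence of the empirical measures), which is in fact the form the paper needs and re-derives from the fixed-$\phi$ statement by a separate separability argument in Appendix~\ref{AppProof}; your route absorbs that step.
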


\begin{proof}[Proof of Theorem~\ref{main}]
Let $\phi \in C(\CC)$.  We apply Theorem~\ref{FKtheorem} using $Y=\CC$, $\mu_x = \mu_1^{z,b}$, $H=T$, $\nu = \mu^b$ and $X_n = Z_n$, noting that all the hypotheses have been met by Theorem~\ref{FullMethod} and Claim~\ref{Markov}.  Thus we obtain a set $\Sigma_\phi \subseteq \Sigma_d^+$ with $P_b(\Sigma_\phi)=1$ such that for all $(i_1, i_2, \dots) \in \Sigma_\phi$ we have $$\langle \phi, \mu_{i_1, \dots, i_{n}}^a\rangle = \frac1{n} \sum_{j=1}^n \phi(z_{i_1, i_2, \dots, i_j}) \to \int \phi \, d\mu^b= \langle \phi, \mu^b \rangle.$$

Since the set $\Sigma_\phi$ depends on $\phi$, we require an extra step to achieve single such set to work for all maps in $C(\CC)$.

Since $\CC$ is compact we know that $C(\CC)$ is separable.  Let $\{\phi_j\}$ be
dense in $C(\CC)$.  Let $\Sigma_0=\cap_{j=1}^{\infty} \Sigma_{\phi_j}$ and
note that $P_b(\Sigma_0)=1$.
Let $\psi \in C(\CC )$.  Let $\epsilon>0$.
Select $\phi_j$ such that $\|\psi-\phi_j\|_{\infty} < \epsilon$.  Then for all $(i_1, i_2, \dots) \in \Sigma_0$, we have
$|{1 \over n}
\sum_{j=1}^n \psi(z_{i_1, i_2, \dots, i_j})
-\int_\CC \psi\,d\mu^b|
\leq |{1 \over n} \sum_{j=1}^n \psi(z_{i_1, i_2, \dots, i_j})
-{1 \over n} \sum_{j=1}^n \phi_j(z_{i_1, i_2, \dots, i_j})|\\
+|{1 \over n} \sum_{j=1}^n \phi_j(z_{i_1, i_2, \dots, i_j})
- \int_\CC \phi_j\,d\mu^b|
+ |\int_\CC \phi_j\,d\mu^b - \int_\CC \psi\,d\mu^b|\\
< \epsilon + |{1 \over n} \sum_{j=1}^n \phi_j(z_{i_1, i_2, \dots, i_j})
- \int_\CC \phi_j\,d\mu^b| +\epsilon
<3\epsilon$
for large $n$.

Hence on $\Sigma_0$ we get the convergence we seek,
for all $\psi \in C(\CC)$ and this completes the proof.
\end{proof}

%

\begin{thebibliography}{10}

\bibitem{BarnsleyFractalsEvery}
Michael Barnsley.
\newblock {\em Fractals everywhere}.
\newblock Academic Press, Inc., Boston, MA, 1988.

\bibitem{BarnsleyDemko}
Michael~F. Barnsley and Stephen~G. Demko.
\newblock Rational approximation of fractals.
\newblock In {\em Rational approximation and interpolation ({T}ampa, {F}la.,
  1983)}, volume 1105 of {\em Lecture Notes in Math.}, pages 73--88. Springer,
  Berlin, 1984.

\bibitem{BEH}
Michael~F. Barnsley, John~H. Elton, and Douglas~P. Hardin.
\newblock Recurrent iterated function systems.
\newblock {\em Constr. Approx.}, 5(1):3--31, 1989.

\bibitem{BeardonGroups}
Alan~F. Beardon.
\newblock {\em The geometry of discrete groups}, volume~91 of {\em Graduate
  Texts in Mathematics}.
\newblock Springer-Verlag, New York, 1995.
\newblock Corrected reprint of the 1983 original.

\bibitem{Boyd2}
David Boyd.
\newblock An invariant measure for finitely generated rational semigroups.
\newblock {\em Complex Variables Theory Appl.}, 39(3):229--254, 1999.

\bibitem{Julia2.0}
Trey Butz, Wendy Conatser, Ben Dean, Kristopher Hart, Yun Li, and Rich
  Stankewitz.
\newblock Julia 2.0 fractal drawing program.
\newblock https://github.com/bsumath/julia/wiki.

\bibitem{Elton}
John~H. Elton.
\newblock An ergodic theorem for iterated maps.
\newblock {\em Ergodic Theory Dynam. Systems}, 7(4):481--488, 1987.

\bibitem{FMS}
David Fried, Sebastian~M. Marotta, and Rich Stankewitz.
\newblock Complex dynamics of {M}\"obius semigroups.
\newblock {\em Ergodic Theory Dynam. Systems}, 32(6):1889--1929, 2012.

\bibitem{FurstKifer}
H.~Furstenberg and Y.~Kifer.
\newblock Random matrix products and measures on projective spaces.
\newblock {\em Israel J. Math.}, 46(1-2):12--32, 1983.

\bibitem{HawkinsTaylor}
Jane Hawkins and Michael Taylor.
\newblock Maximal entropy measure for rational maps and a random iteration
  algorithme.
\newblock {\em Internat. J. Bifur. Chaos Appl. Sci. Engrg.}, 13(6):1442--1447,
  2003.

\bibitem{HM1}
A.~Hinkkanen and G.J. Martin.
\newblock The dynamics of semigroups of rational functions {I}.
\newblock {\em Proc. London Math. Soc.}, 3:358--384, 1996.

\bibitem{HMJ}
A.~Hinkkanen and G.J. Martin.
\newblock {J}ulia sets of rational semigroups.
\newblock {\em Math. Z.}, 222(2):161--169, 1996.

\bibitem{Hutchinson}
John~E. Hutchinson.
\newblock Fractals and self-similarity.
\newblock {\em Indiana Univ. Math. J.}, 30(5):713--747, 1981.

\bibitem{FLM}
A.~Freire{,}~A. Lopes and R.~Ma{\~{n}}\'e.
\newblock An invariant measure for rational maps.
\newblock {\em Bol. Soc. Bras. Math.}, 14(1):45--62, 1983.

\bibitem{Lyu}
M.~Lyubich.
\newblock Entropy properties of rational endomorphisms of the {R}iemann sphere.
\newblock {\em Ergod. Th. \& Dynam. Sys.}, 3:351--385, 1983.

\bibitem{Mane}
R.~Ma{\~{n}}\'e.
\newblock On the uniqueness of the maximizing measure for rational maps.
\newblock {\em Bol. Soc. Bras. Math.}, 14(1):27--43, 1983.

\bibitem{Mihailescu}
Eugen Mihailescu.
\newblock Approximations for {G}ibbs states of arbitrary {H}\"older potentials
  on hyperbolic folded sets.
\newblock {\em Discrete Contin. Dyn. Syst.}, 32(3):961--975, 2012.

\bibitem{RSThesis}
Rich Stankewitz.
\newblock {\em Completely invariant {J}ulia sets of rational semigroups}.
\newblock Ph.D. Thesis. University of Illinois, 1998.

\bibitem{StankewitzRepelDense2}
Rich Stankewitz.
\newblock Density of repelling fixed points in the {J}ulia set of a rational or
  entire semigroup, {II}.
\newblock {\em Discrete Contin. Dyn. Syst.}, 32(7):2583--2589, 2012.

\bibitem{RandomBackStankewitzSumi}
Rich Stankewitz and Hiroki Sumi.
\newblock Random backward iteration algorithm for {J}ulia sets of rational
  semigroups.
\newblock {\em Discrete Contin. Dyn. Syst.}, 35(5):2165--2175, 2015.

\bibitem{Su3}
Hiroki Sumi.
\newblock Skew product maps related to finitely generated rational semigroups.
\newblock {\em Nonlinearity}, 13:995--1019, 2000.

\bibitem{SumiRandom}
Hiroki Sumi.
\newblock Random complex dynamics and semigroups of holomorphic maps.
\newblock {\em Proc. Lond. Math. Soc. (3)}, 102(1):50--112, 2011.

\end{thebibliography}

\end{document}